\newcommand{\Z}{\mathbb Z}
\newcommand{\F}{\mathbb F}
\DeclareMathOperator{\Ext}{Ext}
\newcommand{\A}{\mathcal A}
\newcommand{\Oo}{\widetilde{\Omega}}
\newtheorem{thm}{Theorem}[section]
\newtheorem{cor}[thm]{Corollary}
\theoremstyle{remark}
\newtheorem*{remark}{Remark}
\begin{document}

\title[MString of $BE_8$ and $BE_8\times BE_8$]{The String Bordism of $BE_8$ and $BE_8\times BE_8$ through dimension $14$}


\author{Michael A.~Hill}

\address{Department of Mathematics \\
University of Virginia \\
Charlottesville, VA 22904}  
\addressemail{mikehill@virginia.edu}

\begin{abstract}
We compute the low dimensional String bordism groups $\Oo_{k}^{String}BE_8$ and $\Oo_{k}^{String}(BE_8\times BE_8)$ using a combination of Adams spectral sequences together with comparisons to the Spin bordism cases.
\end{abstract}

\maketitle
\section{Introduction}

In this paper, we will answer a question posed by Hisham Sati about the low dimensional String bordism groups of spaces of particular interest to string theorists. We specifically compute the bordism groups of $K(\Z,4)$; of $BE_8$, the classifying space for principal bundles for the exceptional Lie group $E_8$; and of their cartesian squares. These computations have applications to various models string theory built out of $E_8$ bundles, many of which will be spelled out more fully in a future paper.

This computation generalizes computations of Stong of the Spin bordism groups of $K(\Z,4)$ and of Edwards of the Spin bordism of $BE_8\times BE_8$ \cite{StongMSpinKZ4, EdwardsMSpinBE8BE8}. In particular, we see that the String bordism of $K(\Z,4)$ injects into the Spin bordism of $K(\Z,4)$ through dimension at least $14$. This means that many of the cohomological invariants used to detect bordism classes of Spin manifolds apply equally well here. In particular, the comparison of Adams spectral sequences used to compute the $10^{\text{th}}$ String bordism group of $BE_8$ shows that the Landweber-Stong invariant detects this String bordism class \cite{LandweberStongBilinearForm, DiaconescuMooreWitten}.

Our main results are summarized in the following two theorems.

\begin{thm}\label{thm:MStringBE8}
Through dimension $15$, we have the following reduced String bordism groups.

\begin{table}[h]
\centering
\begin{tabular}{|c|c|c|c|c|c|c|c|c|c|c|c|}
\hline
k & 4 & 5 & 6 & 7 & 8 & 9 & 10 & 11 & 12 & 13 & 14 \\ \hline
$\Oo_k^{String}BE_8$ & $\Z$ & 0 & 0 & 0 & $\Z$ & $\Z/2$ & $\Z/2$ & $0$ & $\Z^2$ & 0 & 0 \\ \hline
\end{tabular}
\end{table}
\end{thm}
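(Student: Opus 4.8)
The plan is to replace $BE_8$ by $K(\Z,4)$, compute with mod $p$ Adams spectral sequences whose $E_2$-pages are $\Ext$-groups over $\A(2)$ and over its odd-primary analogue, and then use Stong's Spin computation together with the ring map $MString\to MSpin$ to pin down the differentials and the $2$-primary extensions.

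First I would reduce to $K(\Z,4)$. Since $\pi_i E_8=0$ for $3<i\le 14$, the classifying map $BE_8\to K(\Z,4)$ for a generator of $H^4(BE_8;\Z)$ is (at least) $16$-connected, so it induces isomorphisms $\Oo_k^{String}BE_8\cong\Oo_k^{String}K(\Z,4)$ for $k\le 14$. The rational part of the answer then drops out of the Atiyah--Hirzebruch spectral sequence: $\widetilde H_*(K(\Z,4);\Q)=\Q$ in degrees $4,8,12,\dots$, $\Oo_*^{String}\otimes\Q=\Q[y_8,y_{12},\dots]$, and for degree reasons no differential is possible, so the free ranks in degrees $4,\dots,14$ are $1,0,0,0,1,0,0,0,2,0,0$. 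Everything else is torsion, which is what the Adams machinery must compute.

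Next I would run the mod $2$ Adams spectral sequence
\[
\Ext_{\A}^{s,t}\bigl(H^*(MString;\F_2)\otimes\widetilde H^*(K(\Z,4);\F_2),\,\F_2\bigr)\ \Longrightarrow\ \bigl(\Oo_{t-s}^{String}K(\Z,4)\bigr)^{\wedge}_2.
\]
Through the total degrees relevant here, $H^*(MString;\F_2)$ agrees with $\A/\!/\A(2)$ (the extra $\A$-summands occur only in higher degrees), so a change-of-rings isomorphism identifies the $E_2$-page with $\Ext_{\A(2)}\bigl(\widetilde H^*(K(\Z,4);\F_2),\F_2\bigr)$ in this range. The core step is to determine $\widetilde H^*(K(\Z,4);\F_2)$ as an $\A(2)$-module through degree roughly $15$ — it is generated by $\iota_4$, with $Sq^2\iota_4$, $Sq^3\iota_4$, $Sq^4\iota_4=\iota_4^2$, $(Sq^2\iota_4)^2$, \dots\ inside the cyclic submodule $\A(2)\cdot\iota_4$, together with the few extra generators forced by products (e.g.\ in degrees $10$ and $12$) — and then to compute its $\Ext_{\A(2)}$, for instance by resolving it through short exact sequences. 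The module $\A(2)\cdot\iota_4$ has a $tmf$-type chart in which the nontriviality of $Sq^2\iota_4$ and $Sq^4\iota_4$ kills the $\eta$- and $\nu$-multiples of $[\iota_4]$, which is why degrees $5,6,7$ vanish; reading off the full chart gives $h_0$-towers accounting for the rational ranks above together with single surviving classes in stems $9$ and $10$.

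Finally I would settle the differentials and extensions by comparison. The ring map $MString\to MSpin$ induces a map to the Adams spectral sequence computing $\Oo_*^{Spin}K(\Z,4)$, whose $E_2$-page is $\Ext_{\A(1)}\bigl(\widetilde H^*(K(\Z,4);\F_2),\F_2\bigr)$ and which Stong has determined; summand by summand this map is injective on $E_2$ in our range, and since the corresponding Spin spectral sequence has no differentials here this forces our spectral sequence to collapse through stem $14$ and shows there is no hidden $2$-torsion beyond the $\Z/2$'s in stems $9$ and $10$ — the stem-$10$ class corresponding, under the comparison, to the bordism class detected by the Landweber--Stong invariant. A parallel computation at $p=3$, where $H^*(MString;\F_3)$ agrees in range with $\A_3/\!/\A_3(1)$ and the nontriviality of $P^1\iota_4$ and $\beta P^1\iota_4$ kills the $\alpha_1$-multiples of $[\iota_4]$ and the potential $3$-torsion in degree $8$, shows that no odd torsion survives in this range; higher primes contribute nothing. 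Assembling the rational ranks, the $2$-primary torsion, and the vanishing of odd torsion yields the table. I expect the main obstacle to be the explicit $\A(2)$-module decomposition of $\widetilde H^*(K(\Z,4);\F_2)$ and the resulting $\Ext_{\A(2)}$ chart; the delicate conceptual point is verifying that the comparison with the Spin case is injective on $E_2$, so that Stong's collapse and detection statements transfer.
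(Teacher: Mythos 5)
Your overall strategy (reduce to $K(\Z,4)$, change of rings to $\Ext_{\A(2)}$ at $p=2$, comparison with the Spin case) matches the paper's, but there is a genuine gap at the decisive step: the mod $2$ Adams spectral sequence for $\widetilde{tmf}_*K(\Z,4)$ does \emph{not} collapse through stem $14$, and your argument for collapse is based on a false premise. The $E_2$-term $\Ext_{\A(2)}(\widetilde H^*(K(\Z,4);\F_2),\F_2)$ has nonzero classes in stems $11$ and $13$ (for instance $\eta x_{10}$, the $\eta$-multiple of a stem-$10$ generator, and a generator in stem $13$); if the spectral sequence collapsed you would get nonzero groups in degrees $11$ and $13$ and possible torsion in degree $12$, contradicting the table you are trying to prove. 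The paper's route is the opposite of yours: Stong's computation gives $\Oo_{11}^{Spin}K(\Z,4)=0$ while the $\Ext_{\A(1)}$ chart for $ko_*K(\Z,4)$ is nonzero in stem $11$, so the $ko$-Adams spectral sequence must support a $d_2$ ($d_2(\eta x_{10})=\eta^2 x_8$, hence $d_2(x_{10})=\eta x_8$ by $\eta$-linearity); injectivity of the map on $E_2$-terms induced by $tmf\to ko$ then \emph{transfers this differential} to the $tmf$ side rather than proving collapse. A further $d_2$ on the stem-$13$ generator, obtained from the Massey product $\langle 2,\eta,\eta^2\rangle$ decomposition of the degree-$4$ generator of $ko_*$ and the Leibniz rule, is what kills stem $13$ and shows $\Oo_{12}^{String}$ is torsion free. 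Without these two differentials your computation gives the wrong answer in degrees $11$, $12$, and $13$.

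Two smaller points. At $p=3$, $H^*(tmf;\F_3)$ is not cyclic over the Steenrod algebra, so the change of rings you invoke is not the standard one; the paper uses a Baker--Lazarev relative Adams spectral sequence over a Hopf algebra $\tilde{\A}(1)$ that contains an exterior generator $a_2$ not coming from $\A$. And ``higher primes contribute nothing'' needs an argument in degree $12$: $H_{12}(K(\Z,4);\Z)$ contains $5$-torsion (the classes $\mathcal P^1\iota_4$ and $\beta\mathcal P^1\iota_4$ at $p=5$), and one must check, e.g.\ via the fact that $\beta\mathcal P^1$ acts as $Q_1$ and detects $v_1=c_4$ in $tmf$ at $p=5$, that this torsion does not survive to $\Oo_{12}^{String}$.
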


The String bordism groups of $BE_8\times BE_8$ contain two copies of the String bordism groups of $BE_8$, together with the String bordism of the space $BE_8\wedge BE_8$. These new groups are given in the following theorem.

\begin{thm}\label{thm:MStringBE8BE8}
Through dimension $15$, we have the following reduced String bordism groups.

\begin{table}[h]
\centering
\begin{tabular}{|c|c|c|c|c|c|c|c|c|c|c|c|}
\hline
k & 4 & 5 & 6 & 7 & 8 & 9 & 10 & 11 & 12 & 13 & 14 \\ \hline
$\Oo_k^{String}(BE_8\wedge BE_8)$ & 0 & 0 & 0 & 0 & $\Z$ & 0 & $\Z/2$ & 0 & $\Z^2\oplus\Z/3$ & 0 & $(\Z/2)^2$ \\ \hline
\end{tabular}
\end{table}
\end{thm}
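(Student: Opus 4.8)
The plan is to apply the Thom isomorphism $\Oo_k^{String}(BE_8\wedge BE_8)\cong\pi_k\big(MString\wedge(BE_8\wedge BE_8)\big)$ and to compute the right side one prime at a time with an Adams spectral sequence, comparing with the $MSpin$ case as in the work of Stong and Edwards. Two connectivity inputs organize everything. First, the map $BE_8\to K(\Z,4)$ classifying a generator of $H^4$ is an isomorphism on $\F_p$-cohomology through degree $15$, since $H^\ast(BE_8;\F_p)$ is generated by its degree-$4$ class in that range (the first further generator, in degree $16$, comes from the degree-$15$ generator of $H^\ast E_8$); hence in degrees $\le 15$ one has $\tilde H^\ast(BE_8\wedge BE_8;\F_p)\cong\tilde H^\ast(K(\Z,4);\F_p)^{\otimes2}$, made explicit by Serre and Cartan. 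Second, $BString$ is $7$-connected (one kills $\pi_4BSpin\cong\Z$), so $H^\ast(MString;\F_p)$ is $\F_p$ in degree $0$ and vanishes in degrees $1$ through $7$; since $BE_8\wedge BE_8$ is $7$-connected, the K\"unneth formula then shows that the map $H^\ast\big(MString\wedge(BE_8\wedge BE_8);\F_p\big)\to\tilde H^\ast(BE_8\wedge BE_8;\F_p)$ induced by the unit is an isomorphism through degree $15$, so that the Adams $E_2$-page is $\Ext_{\A}\big(\tilde H^\ast(BE_8\wedge BE_8;\F_p),\F_p\big)$ in stems $\le 14$. Equivalently, the cofiber of $S^0\to MString$ is $7$-connected, so its smash with $BE_8\wedge BE_8$ is $15$-connected and $\Oo_k^{String}(BE_8\wedge BE_8)\cong\pi_k^s(BE_8\wedge BE_8)$ for $k\le 15$. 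In particular only $p=2$ and $p=3$ can contribute torsion in range.

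\textbf{Odd primes.}
For $p\ge 5$ the module $\tilde H^\ast(BE_8\wedge BE_8;\F_p)$ is, through degree $15$, concentrated in degrees $8$ and $12$ of dimensions $1$ and $2$ with no available Steenrod operations, so its $\Ext$ contributes only $\Z_{(p)}$ in degree $8$ and $\Z_{(p)}^2$ in degree $12$. At $p=3$ the one new feature in range is $P^1\iota_4$ in degree $8$, with Bockstein $\beta P^1\iota_4$ in degree $9$: in the tensor square $P^1$ carries $\iota\otimes\iota$ to $P^1\iota\otimes\iota+\iota\otimes P^1\iota$ in degree $12$, and $\beta$ carries $P^1\iota\otimes\iota$ to $\beta P^1\iota\otimes\iota$ in degree $13$. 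Running the Adams spectral sequence, $\Ext^0$ in internal degree $12$ is three-dimensional; two of its classes carry infinite $a_0$-towers (the rational $\Z^2$) and the third carries an $a_0$-tower of length one, which survives to give the summand $\Z/3$ in degree $12$. There is no $3$-torsion in degree $14$ since $\tilde H^{14}(BE_8\wedge BE_8;\F_3)=0$, and the zeros in degrees $9,11,13$ and below degree $8$ are immediate.

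\textbf{The prime $2$: the main obstacle.}
One first needs the full $\A$-action on $\tilde H^\ast(K(\Z,4);\F_2)$ --- the polynomial algebra on $\iota_4,\ Sq^2\iota_4,\ Sq^3\iota_4,\ Sq^4Sq^2\iota_4,\ Sq^5Sq^2\iota_4,\ Sq^6Sq^3\iota_4,\dots$ with the Steenrod structure fixed by the Adem relations --- and hence on the tensor square, whose reduced dimensions in degrees $8,\dots,15$ are $1,0,2,2,3,2,7,6$. I would then compute $\Ext_{\A}$ of this module through stem $15$ by a minimal free resolution, building on the $\Ext$ charts for the $K(\Z,4)$ and $BE_8$ cases (themselves tied to Stong's $\Omega_\ast^{Spin}K(\Z,4)$) by way of an $\A$-module filtration of the tensor square whose associated graded is a sum of suspensions of the $K(\Z,4)$-module and a few elementary modules. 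The obstacle is that in stems $12$--$14$ the $E_2$-page is large while the answer is small --- the $2$-part in degree $12$ is only $\Z^2$, the degree-$13$ group vanishes, and the degree-$14$ group is $(\Z/2)^2$ --- so substantial $d_2$ differentials, and perhaps a $d_3$, are forced, and the degree-$14$ group is a priori known only up to extension (a hidden $h_0$-extension would give $\Z/4$). I expect to pin down the differentials and rule out the hidden extension by naturality against two known targets: the comparison $MString\wedge(BE_8\wedge BE_8)\to MSpin\wedge(BE_8\wedge BE_8)$, matching the spectral sequence with the one for Edwards's $\Omega_\ast^{Spin}(BE_8\times BE_8)$ (after isolating the $BE_8\wedge BE_8$ summand, and checking via the cofiber of $S^0\to MSpin$ that this map is injective through dimension $14$); and the identification $\Oo_k^{String}\cong\pi_k^s$, which feeds the classical Adams differentials and the multiplicative structure of the low stable stems --- the image of $J$, products with $\eta$ and $\nu$ --- into the bookkeeping. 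Matching the $E_2$-page against both targets leaves exactly the groups tabulated.
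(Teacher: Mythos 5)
Your reduction to $\pi_k^s(BE_8\wedge BE_8)$ is correct (the cofiber of the unit $S^0\to MString$ is $7$-connected, so smashing it with the $7$-connected $BE_8\wedge BE_8$ gives something $15$-connected), your cell counts check out, and your odd-primary analysis, including the $\Z/3$ in degree $12$, is sound. But that reduction is precisely what makes your $p=2$ step unworkable, and that step is where the theorem actually lives. You have replaced the paper's computation of $\widetilde{tmf}_*\big(K(\Z,4)\wedge K(\Z,4)\big)$ --- where the $15$-connectedness of the $\sigma$-orientation plus the change of rings coming from $H^\ast(tmf)=\A/\!/\A(2)$ reduce the $E_2$-page to $\Ext_{\A(2)}$ of the tensor-square module, a chart sparse enough that \emph{no} differentials or extensions are possible for degree reasons --- by $\Ext_{\A}$ over the full Steenrod algebra converging to stable homotopy. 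That $E_2$-page carries the stable-stem contributions of every cell ($\nu$ and the image of $J$ over the bottom cell, $\nu^2$ feeding stem $14$, and so on); you yourself concede that ``substantial $d_2$ differentials, and perhaps a $d_3$, are forced'' and that a hidden $h_0$-extension in degree $14$ must be excluded, and then none of this is carried out. ``Matching the $E_2$-page against both targets leaves exactly the groups tabulated'' is a statement of intent, not an argument.

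Worse, the principal tool you propose for the matching fails. The cofiber of $S^0\to MSpin$ is only $3$-connected, so $\pi_k^s(BE_8\wedge BE_8)\to\pi_k\big(MSpin\wedge(BE_8\wedge BE_8)\big)$ is an isomorphism only for $k\le 10$; there is no injectivity ``through dimension $14$,'' and the paper remarks explicitly that $\Oo_k^{String}(BE_8\times BE_8)\to\Oo_k^{Spin}(BE_8\times BE_8)$ is \emph{not} injective in this range, in contrast with the $BE_8$ case. So the comparison with Edwards's Spin computation cannot pin down the differentials and extensions in degrees $12$--$14$, which is exactly where you need it. The missing idea is the paper's first move: pass to $tmf$-homology via the $\sigma$-orientation and then to $\Ext_{\A(2)}$ at $p=2$ (and to $\Ext_{\tilde{\A}(1)}$ at $p=3$) by change of rings, after which the chart is small enough that the theorem is read off the $E_2$-page with no differentials to chase.
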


\section{Computational Reductions}
\subsection{Reduction to $tmf$ and $K(\Z,4)$}
Our actual computation will be of the $tmf$-homology of  $K(\Z,4)$. We proceed in this section to explain why this computation is sufficient.

The $\sigma$-orientation of Ando-Hopkins-Strickland is the primary tool. This is an $E_\infty$ ring map $MString\to tmf$ that refines the Witten genus \cite{AndoHopkinsStricklandHinfty}.

\begin{thm}
The $\sigma$-orientation is $15$-connected.
\end{thm}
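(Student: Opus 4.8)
The target is to show that the $\sigma$-orientation $\sigma\colon MString\to tmf$ is an isomorphism on $\pi_k$ for $k\le 14$ and a surjection on $\pi_{15}$. I would obtain surjectivity directly and attack injectivity by a comparison valid in a range. Rationally, $\sigma$ realizes the Witten genus; via the Thom isomorphism $\pi_*MString\otimes\Q$ is the rational Pontryagin ring $H_*(BString;\Q)=\Q[b_8,b_{12},b_{16},\dots]$, polynomial on one generator in each degree divisible by $4$ and at least $8$, while $\pi_*tmf\otimes\Q=MF_*\otimes\Q=\Q[c_4,c_6]$ with $|c_4|=8$, $|c_6|=12$, and the Witten genus is classically surjective onto the latter. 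Thus $\sigma$ is rationally surjective, and in degrees $\le 15$ it is just the isomorphism between the rank-one groups in degrees $8$ and $12$, all other rational groups vanishing. (It fails to be injective on $\pi_{16}$, since $\pi_{16}MString\otimes\Q$ has rank $2$ and $\pi_{16}tmf\otimes\Q$ has rank $1$, so $15$-connectivity is essentially sharp.) For the torsion I would use that $\sigma$ is an $E_\infty$ ring map, so $S\to MString\xrightarrow{\sigma}tmf$ is the unit of $tmf$; through degree $15$ the torsion of $\pi_*tmf$ lies in the Hurewicz image of $\pi_*^s$, hence is hit through $\pi_*MString$. Combining, $\sigma$ is surjective on $\pi_k$ for every $k\le 15$.

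For injectivity in the range $k\le 14$ there are two routes. The elementary one invokes the known computations of $\pi_*MString$ (Giambalvo) and of $\pi_*tmf$ (Hopkins--Mahowald, Bauer): the two groups are abstractly isomorphic for each $k\le 14$, and a surjection of finitely generated abelian groups onto an abstractly isomorphic target is an isomorphism. The more structural route compares Adams spectral sequences prime by prime. At $p=2$, $H^*(tmf;\F_2)$ is the cyclic $\A$-module $\A/\!/\A(2)$, and the point is that $H^*(MString;\F_2)$ is, as an $\A$-module, $\A/\!/\A(2)$ plus a complementary submodule concentrated in degrees $\ge 16$, with $\sigma^*$ the inclusion of the cyclic summand; then the $\Ext_\A$-pages agree in stems $\le 15$, the differentials agree by naturality, and $\pi_k(MString)^\wedge_2\to\pi_k(tmf)^\wedge_2$ is an isomorphism for $k\le 15$. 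At $p=3$ one argues the same way with $\A(1)=\langle\beta,P^1\rangle$ replacing $\A(2)$, and for $p\ge 5$ both homotopy rings are torsion-free through degree $15$ and concentrated in degrees $8$ and $12$, where $\sigma$ is the evident isomorphism; the arithmetic fracture square then assembles these local statements.

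Under either route the essential ingredient — and the main obstacle — is the same: one needs $\pi_*MString$, equivalently the structure of $H^*(BString;\F_p)$ over the Steenrod algebra, through roughly degree $16$, and must check it matches the corresponding truncation of $tmf$; for the Adams route this is the statement that the Thom class is the only Steenrod-module generator of $H^*(MString;\F_p)$ below degree $16$, which one verifies from the Serre spectral sequence of $K(\Z,3)\to BString\to BSpin$ with the transgression $d_4(\iota_3)=w_4$ (and its consequences under $\mathrm{Sq}^2$ and $\mathrm{Sq}^4$, reflecting $w_2=w_4=0$ on $BString$). Granting this, $\sigma$ is an isomorphism on $\pi_k$ for $k\le 14$ and a surjection on $\pi_{15}$, i.e. the $\sigma$-orientation is $15$-connected.
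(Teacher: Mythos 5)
Your rational/$p\ge 5$ analysis and your $p=2$ Adams comparison are essentially the paper's argument: the whole proof turns on the fact (Stong, Bahri--Mahowald) that $H^\ast(MString;\F_2)=\A/\!/\A(2)\oplus M$ with $M$ highly connected (the paper records $M$ as $19$-connected, not merely concentrated in degrees $\ge 16$, but either suffices), together with $H^\ast(tmf;\F_2)=\A/\!/\A(2)$ and the fact that $\sigma$ is a ring map, so $\sigma^\ast$ is the inclusion of the cyclic summand. Your surjectivity preamble (Witten genus onto $\Q[c_4,c_6]$, torsion of $\pi_\ast tmf$ through degree $15$ coming from the sphere) is correct but not needed once the local isomorphisms are in hand.

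The genuine gap is at $p=3$. You propose to ``argue the same way with $\A(1)=\langle\beta,P^1\rangle$ replacing $\A(2)$,'' but $H^\ast(tmf;\F_3)$ is \emph{not} $\A/\!/\A(1)$ --- it is not even a cyclic $\A$-module (this is exactly why the paper later has to introduce the auxiliary Hopf algebra $\tilde{\A}(1)$ and a Baker--Lazarev style Adams spectral sequence for the $3$-local $tmf$-homology computations). So the cohomological comparison does not transport to $p=3$, and the paper instead simply cites the Hovey--Ravenel computation of the $7$-connected cobordism ring at $p=3$, which gives $15$-connectivity there directly. Your fallback ``elementary route'' (compare abstract groups and use surjectivity) could in principle cover $p=3$, but be aware that the input you'd cite --- Giambalvo's tables for $\pi_\ast MO\langle 8\rangle$ --- is unreliable precisely in its odd-primary part; the corrected $3$-primary computation is Hovey--Ravenel's, i.e.\ the same reference the paper uses. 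With that substitution at $p=3$, your argument closes up and agrees with the paper's.
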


\begin{proof}
We argue this locally, showing that it is true at $2$, at $3$, and with $6$ inverted.

For $p=2$, we show this is true in cohomology. Since the $\sigma$ orientation is a ring map, composition with the unit map shows that
\[
\sigma^\ast\colon\Z=H^0(tmf)\to H^0(MString)=\Z
\]
is an isomorphism.

Computations of Stong and Bahri-Mahowald show that as a module over $\A$, the Steenrod algebra,
\[
H^\ast(MString)=\A/\!/\A(2)\oplus M,
\]
where $\A(2)$ is the subalgebra of the Steenrod algebra generated by $Sq^1$, $Sq^2$, and $Sq^4$, and where $M$ is a $19$-connected $\A$-module \cite{StongBOn, BahriMahowaldMString}. Hopkins and Mahowald have shown that $H^\ast(tmf)=\A/\!/\A(2)$ \cite{HopkinsMahowaldEO2}, so since $\sigma^\ast$ is an isomorphism on $H^0$, we conclude that
\[
\A/\!/\A(2)=H^\ast(tmf)\hookrightarrow H^\ast(MString).
\]
In particular, the smallest degree element of the cokernel is in dimension $20$, giving the result for $p=2$.

The remaining cases are simpler. For $p=3$, computations of Hovey and Ravenel show that the $\sigma$-orientation is $15$-connected \cite{HoveyRavenelBO8}. When $6$ is a unit, there is no torsion in the homotopy of $MString$, and in degrees less than $17$, there are polynomial generators in degree $8$, $12$, and $16$. The $\sigma$-orientation sends the generator in degree $8$ to $c_4$ and the one in degree $12$ to $c_6$. This gives the final case.
\end{proof}

Bott and Samelson showed that the Postnikov section $E_8\to K(\Z,3)$ is $14$-connected \cite{BottSamelson}. This implies that the Postnikov section $BE_8\to K(\Z,4)$ is an isomorphism in homotopy in degrees less than $16$. In particular, we see that if $X^{[k]}$ denotes the $k$-skeleton of $X$, then
\[
K(\Z,4)^{[15]}\simeq BE_8^{[15]}.
\]
Since the spectra $tmf$ and $MString$ are $(-1)$-connected, the inclusion of the $15$-skeleton induces an isomorphism in homology through degree $14$. Combined with the previous theorem, we conclude the following.

\begin{cor}
As graded rings,
\[
\widetilde{tmf}_k K(\Z,4) \cong \Oo_k^{String} BE_8
\]
for $k < 15$, where the isomorphism is the composite of the Postnikov section with the $\sigma$-orientation.
\end{cor}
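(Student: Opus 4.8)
The plan is to realize the claimed isomorphism as the two-step composite named in the statement and then to check that it respects products; given the two results already proved, both halves are short. Write $\Oo^{String}_k BE_8 = \widetilde{MString}_k(BE_8) = \pi_k\big(MString\wedge\Sigma^\infty BE_8\big)$. The Postnikov section $p\colon BE_8\to K(\Z,4)$ is a map of simply connected spaces that is an isomorphism on $\pi_i$ for $i<16$, hence on reduced integral homology in the same range, so the homotopy cofiber of $\Sigma^\infty p$ is a $15$-connected spectrum. Smashing with the connective spectrum $MString$ keeps the cofiber $15$-connected, so its long exact sequence gives that $p$ induces an isomorphism $\widetilde{MString}_k(BE_8)\cong\widetilde{MString}_k(K(\Z,4))$ for $k<15$. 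This is just the spectrum-level shadow of what is recorded before the statement: $BE_8^{[15]}\simeq K(\Z,4)^{[15]}$ by Bott--Samelson, and, $MString$ being connective, $\widetilde{MString}_k$ of a complex depends only on its $15$-skeleton for $k<15$.

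For the second half, let $C$ be the homotopy cofiber of the $\sigma$-orientation $MString\to tmf$; by the preceding theorem $C$ is $15$-connected. Smash the cofiber sequence $MString\to tmf\to C$ with $\Sigma^\infty K(\Z,4)$. The bottom cell of $K(\Z,4)$ is in dimension $4$, so $\Sigma^\infty K(\Z,4)$ is $3$-connected and $C\wedge\Sigma^\infty K(\Z,4)$ is $19$-connected; the long exact sequence of the smashed cofiber sequence then shows that $\sigma$ induces an isomorphism $\widetilde{MString}_k(K(\Z,4))\cong\widetilde{tmf}_k(K(\Z,4))$ for $k\le 18$, in particular for $k<15$. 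Composing the two isomorphisms yields $\Oo^{String}_k BE_8\cong\widetilde{tmf}_k K(\Z,4)$ for $k<15$, realized, as asserted, by the Postnikov section followed by the $\sigma$-orientation.

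The one step that is not completely routine is the multiplicativity clause. The ring structure on $\widetilde{tmf}_\ast K(\Z,4)$ is the Pontryagin product assembled from the commutative multiplication on $tmf$ and the infinite-loop (hence $H$-space) structure of $K(\Z,4)$; the same data make $\widetilde{MString}_\ast K(\Z,4)$ a graded ring, and through dimension $15$ the first isomorphism above transports this product to the one on $\Oo^{String}_\ast BE_8$. Because the $\sigma$-orientation is an $E_\infty$-ring map, $\sigma_\ast$ is a ring homomorphism, so the composite is an isomorphism of graded rings in the stated range. There is no deeper obstacle: the substantive inputs, namely the $15$-connectivity of the $\sigma$-orientation and the agreement of $BE_8$ with $K(\Z,4)$ on $15$-skeleta, are already available, and the only care needed is to be sure that the product on $\Oo^{String}_\ast BE_8$ being compared is exactly the one inherited from the $H$-space structure on $K(\Z,4)$ via the identification of low skeleta, so that the phrase ``as graded rings'' carries content.
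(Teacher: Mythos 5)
Your argument is correct and follows the same route as the paper: combine the $15$-connectivity of the $\sigma$-orientation with the fact that $BE_8\to K(\Z,4)$ is highly connected (Bott--Samelson), using connectivity of the relevant cofibers to transfer isomorphisms in the stated range. Your added care about where the Pontryagin product on $\Oo^{String}_\ast BE_8$ comes from (the $H$-space structure of $K(\Z,4)$ transported through the identification of $15$-skeleta) is a reasonable reading of the phrase ``as graded rings,'' which the paper itself does not elaborate.
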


This is a substantial simplification, since $tmf$-homology at all primes is computable using the Adams spectral sequence.

\subsection{Forms of the Adams Spectral Sequence}

Hopkins and Mahowald have shown that $H^\ast(tmf)=\A/\!/\A(2)$, where $\A(2)$ is the subalgebra of the Steenrod algebra generated by $Sq^1$, $Sq^2$, and $Sq^4$. Standard change-of-rings arguments then show the following.

\begin{thm}[\cite{HopkinsMahowaldEO2}]\label{thm:tmfASSat2}
If $X$ is a finite spectrum, then there is an Adams spectral sequence of the form
\[
E_2=\Ext_{\A(2)}^{s,t}\big(\tilde{H}^\ast(X;\F_2),\F_2\big)\Longrightarrow \widetilde{tmf}_{t-s}X_{2}^{\wedge}.
\]
\end{thm}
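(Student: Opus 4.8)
The plan is to realize the asserted spectral sequence as the classical mod-$2$ Adams spectral sequence of the spectrum $tmf\wedge X$, and then to simplify its $E_2$-term by a change-of-rings.

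First I would set up the Adams spectral sequence. Since $tmf$ is connective and of finite type and $X$ is finite, the smash product $tmf\wedge X$ is connective and of finite type, so its mod-$2$ Adams spectral sequence converges and has the form
\[
E_2=\Ext_{\A}^{s,t}\big(H^\ast(tmf\wedge X;\F_2),\F_2\big)\Longrightarrow\pi_{t-s}(tmf\wedge X)_2^\wedge=\widetilde{tmf}_{t-s}X_2^\wedge.
\]
The second step is to identify the $E_2$-term. Because $X$ is finite, $\tilde H^\ast(X;\F_2)$ is finite-dimensional over $\F_2$, so the Künneth theorem gives an isomorphism of $\A$-modules $H^\ast(tmf\wedge X;\F_2)\cong H^\ast(tmf;\F_2)\otimes_{\F_2}\tilde H^\ast(X;\F_2)$, with $\A$ acting diagonally through the Cartan formula. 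By the theorem of Hopkins and Mahowald quoted above, $H^\ast(tmf;\F_2)=\A/\!/\A(2)$, so
\[
H^\ast(tmf\wedge X;\F_2)\cong\big(\A/\!/\A(2)\big)\otimes_{\F_2}\tilde H^\ast(X;\F_2).
\]

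The third step is the change-of-rings. Since $\A(2)$ is a finite sub-Hopf-algebra of $\A$, the Milnor--Moore theorem shows that $\A$ is free as a module over $\A(2)$; hence the extension-of-scalars functor $\A\otimes_{\A(2)}(-)$ is exact and left adjoint to restriction along $\A(2)\hookrightarrow\A$, so that $\Ext_{\A}^{s,t}(\A\otimes_{\A(2)}N,\F_2)\cong\Ext_{\A(2)}^{s,t}(N,\F_2)$ for every $\A(2)$-module $N$. The shearing isomorphism of $\A$-modules, built from the coproduct and antipode of $\A$, identifies $(\A/\!/\A(2))\otimes_{\F_2}N$ with $\A\otimes_{\A(2)}N$ whenever $N$ is the restriction of an $\A$-module; taking $N=\tilde H^\ast(X;\F_2)$ and combining with the previous two steps yields
\[
E_2\cong\Ext_{\A(2)}^{s,t}\big(\tilde H^\ast(X;\F_2),\F_2\big)\Longrightarrow\widetilde{tmf}_{t-s}X_2^\wedge,
\]
as claimed.

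Every ingredient here is formal, so there is no genuine obstacle; the only points demanding care are that the Künneth identification is literally an isomorphism of $\A$-modules (which is where finiteness of $X$ enters) and that the shearing isomorphism does intertwine the diagonal $\A$-action on $(\A/\!/\A(2))\otimes\tilde H^\ast(X)$ with left multiplication on $\A\otimes_{\A(2)}\tilde H^\ast(X)$. Convergence of the Adams spectral sequence likewise rests on $tmf\wedge X$ being connective and of finite type, which again uses that $X$ is finite.
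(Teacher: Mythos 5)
Your argument is correct and is precisely the ``standard change-of-rings argument'' the paper invokes without spelling out: the classical mod-$2$ Adams spectral sequence for $tmf\wedge X$, the K\"unneth identification using $H^\ast(tmf)=\A/\!/\A(2)$, and the shearing/change-of-rings isomorphism reducing $\Ext_{\A}$ to $\Ext_{\A(2)}$. No discrepancy with the paper's approach.
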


At $p=3$, the cohomology of $tmf$ is not a cyclic $\A$-module, so similar techniques no longer work. By considering instead a slightly refined version of the Adams spectral sequence, described by Baker and Lazarev \cite{BakerLazarev}, we can still build an analogous spectral sequence.

There is a Hopf algebra $\tilde{\A}(1)$ analogous to $\A(2)$ used to compute $tmf$-homology at $p=3$. This is most easily expressed in the dual formulation:
\[
\tilde{\A}(1)_\ast=\F_3[\xi_1]/\xi_1^3\otimes E(\tau_0,\tau_1,a_2),
\]
where $\xi_1$, $\tau_0$, and $\tau_1$ arise from the elements of the dual Steenrod algebra of the same name, and where the coproduct on $a_2$ is given by
\[
\psi(a_2)=a_2\otimes 1+1\otimes a_2 + \xi_1\otimes\tau_1-\xi_1^2\otimes\tau_0.
\]

\begin{thm}[\cite{HilltmfBSigma3}]\label{thm:tmfASSat3}
If $X$ is a finite spectrum, then there is an Adams spectral sequence of the form
\[
E_2=\Ext_{\tilde{\A}(1)}^{s,t}\big(\tilde{H}^\ast(X;\F_3),\F_3\big)\Longrightarrow \widetilde{tmf}_{t-s}X_{3}^{\wedge}.
\]
\end{thm}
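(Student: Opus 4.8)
The plan is to realize this spectral sequence as an instance of the relative Adams spectral sequence of Baker and Lazarev \cite{BakerLazarev}, applied to the commutative ring spectrum $tmf$ and its commutative $tmf$-algebra $H\F_3$. One makes $H\F_3$ into an $E_\infty$ $tmf$-algebra via the composite $tmf\to\tau_{\le 0}tmf=H\Z\to H\F_3$; then for any $tmf$-module $M$ the cosimplicial spectrum $[n]\mapsto H\F_3^{\wedge_{tmf}(n+1)}\wedge_{tmf}M$ has totalization the $H\F_3$-nilpotent completion $M^\wedge_{H\F_3}$, and its associated spectral sequence takes the form
\[
E_2^{s,t}=\Ext^{s,t}_{\pi_\ast(H\F_3\wedge_{tmf}H\F_3)}\big(\F_3,\pi_\ast(H\F_3\wedge_{tmf}M)\big)\Longrightarrow\pi_{t-s}\big(M^\wedge_{H\F_3}\big).
\]
Since $\pi_0(H\F_3\wedge_{tmf}H\F_3)=\F_3$, the indexing object is honestly a graded commutative, cocommutative Hopf algebra over $\F_3$ rather than a Hopf algebroid, and since everything in sight is flat over the field $\F_3$, the homotopy of the cosimplicial spectrum is literally the cobar complex of this coalgebra with coefficients in the comodule $\pi_\ast(H\F_3\wedge_{tmf}M)$, so the identification of $E_2$ above holds on the nose. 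I would run this with $M=tmf\wedge X$ for $X$ a finite spectrum.

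For this $M$ both ends of the spectral sequence simplify. On the coefficient side, $H\F_3\wedge_{tmf}(tmf\wedge X)\simeq H\F_3\wedge X$, so $\pi_\ast(H\F_3\wedge_{tmf}M)\cong\tilde H_\ast(X;\F_3)$ with its evident comodule structure. On the abutment, $tmf$ is connective and, once localized at the prime $3$, of finite type, so $tmf\wedge X$ is connective of finite type; hence its $H\F_3$-nilpotent completion agrees with its $3$-completion and $\pi_{t-s}(M^\wedge_{H\F_3})=\widetilde{tmf}_{t-s}X^\wedge_3$, with strong convergence guaranteed by the finite-type hypothesis exactly as for the classical mod-$3$ Adams spectral sequence. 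Finally, because $X$ is finite, $\tilde H_\ast(X;\F_3)$ is a finite-dimensional $\F_3$-vector space, so comodule $\Ext$ over the coalgebra $\tilde{\A}(1)_\ast$ is canonically dual to module $\Ext$ over the dual Hopf algebra $\tilde{\A}(1)$ applied to $\tilde H^\ast(X;\F_3)$. Granting the identification of the indexing Hopf algebra described next, this is precisely the asserted spectral sequence.

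The crux, and the step I expect to be the main obstacle, is the purely algebraic identification
\[
\pi_\ast(H\F_3\wedge_{tmf}H\F_3)\cong\tilde{\A}(1)_\ast=\F_3[\xi_1]/\xi_1^3\otimes E(\tau_0,\tau_1,a_2)
\]
as Hopf algebras over $\F_3$, including the twisted coproduct $\psi(a_2)=a_2\otimes 1+1\otimes a_2+\xi_1\otimes\tau_1-\xi_1^2\otimes\tau_0$. The unit $S\to tmf$ induces a map of Hopf algebras $\A_\ast=\pi_\ast(H\F_3\wedge H\F_3)\to\pi_\ast(H\F_3\wedge_{tmf}H\F_3)$; its image is the sub-Hopf-algebra generated by the classes $\xi_1,\tau_0,\tau_1$, a copy of the classical $\A(1)$ at the prime $3$, and it carries the ``expected'' part of the answer together with its usual coproducts. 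The genuinely new phenomenon is the extra exterior generator $a_2$, which is \emph{not} in the image of $\A_\ast$ and which records the failure of $H^\ast(tmf;\F_3)$ to be a cyclic module over the Steenrod algebra. Pinning down its internal degree and its coproduct requires real structural input about $tmf$ at $3$: for instance running the Künneth (bar) spectral sequence $\mathrm{Tor}^{\pi_\ast tmf}_{\ast\ast}(\F_3,\F_3)\Rightarrow\pi_\ast(H\F_3\wedge_{tmf}H\F_3)$ from the known ring structure of $\pi_\ast tmf_{(3)}$, or exhibiting an explicit finite $tmf$-cell model of $H\F_3$, and then matching the output against the Hopf algebra displayed above. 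Once this has been done, everything else is formal from the standard properties of the relative Adams spectral sequence.
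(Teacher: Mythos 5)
Your proposal is correct and follows essentially the route the paper itself indicates: the theorem is quoted from \cite{HilltmfBSigma3} and is constructed there exactly as the Baker--Lazarev relative Adams spectral sequence in $tmf$-modules based on $H\F_3$, applied to $tmf\wedge X$, with convergence and the identification of the $E_2$-term handled as you describe. You also correctly isolate the one nonformal input --- the computation of $\pi_\ast(H\F_3\wedge_{tmf}H\F_3)\cong\tilde{\A}(1)_\ast$ as a Hopf algebra, including the twisted coproduct on $a_2$ --- which is precisely the content supplied by the cited reference (resting on the Hopkins--Mahowald/Hopkins--Miller description of $tmf$ at $3$).
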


At $p>3$, the computations are simplified by the lack of $p$-torsion and a splitting of $tmf$ into spectra built out of $BP$.

\begin{thm}[\cite{HMtmf, Rezk512Notes}]
At primes larger than $3$, we have a splitting
\[
tmf_{(p)}=\bigvee BP\langle 2\rangle.
\]
\end{thm}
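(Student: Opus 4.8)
The plan is to work one prime $p\ge 5$ at a time, endow $tmf_{(p)}$ with a $BP\langle 2\rangle$-module structure, and then recognize it as a free $BP\langle 2\rangle$-module by exhibiting a homogeneous basis; assembling the basis elements into a single map produces the splitting.

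First I would record the torsion-freeness at $p\ge 5$, so that $\pi_\ast tmf_{(p)}\cong\Z_{(p)}[c_4,c_6]$ with $|c_4|=8$ and $|c_6|=12$, in particular concentrated in degrees divisible by $4$, while $\pi_\ast BP\langle 2\rangle=\Z_{(p)}[v_1,v_2]$ with $|v_1|=2(p-1)$ and $|v_2|=2(p^2-1)$. A short Atiyah--Hirzebruch argument shows $tmf_{(p)}$ is complex orientable: every differential out of the generator $x\in H^2(\mathbb{CP}^\infty)$ is forced to vanish, since its target lies either in an odd cohomological degree of $\mathbb{CP}^\infty$ or in a vanishing coefficient group, so $x$ survives and restricts to the canonical generator on $\mathbb{CP}^1$. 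A complex orientation together with the Quillen idempotent then yields a ring map $BP\to tmf_{(p)}$. Since the formal group of $tmf$ is the formal completion of the universal generalized elliptic curve, it has height at most $2$, and I would use this --- after a suitable change of coordinates on the orientation --- to arrange that this map kills $v_n$ for $n\ge 3$, so that it factors through $BP\langle 2\rangle\to tmf_{(p)}$ and makes $tmf_{(p)}$ a $BP\langle 2\rangle$-module.

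Next I would study $\pi_\ast tmf_{(p)}$ as a module over $\pi_\ast BP\langle 2\rangle=\Z_{(p)}[v_1,v_2]$. The key claim is that the images of $v_1$ and $v_2$ form a homogeneous system of parameters in $\F_p[c_4,c_6]=\pi_\ast tmf_{(p)}/p$: the image of $v_1$ is nonzero because the ordinary locus modulo $p$ is nonempty, and the image of $v_2$ is a nonzerodivisor modulo $(p,v_1)$ because the supersingular locus is nonempty and the formal group there has height exactly $2$, so that $v_2$ is a unit on a neighbourhood of it. (Degree by degree one identifies which power of $c_4$ or $c_6$ carries a unit coefficient in each image, using the explicit $p$-typical formula.) Granting this, $\Z_{(p)}[c_4,c_6]$ is module-finite over the regular ring $\Z_{(p)}[v_1,v_2]$, and being Cohen--Macaulay it is flat, hence free. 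Choosing homogeneous generators $x_1,\dots,x_n$ of degrees $d_1,\dots,d_n$, each $x_i$ is classified by a $BP\langle 2\rangle$-module map $\Sigma^{d_i}BP\langle 2\rangle\to tmf_{(p)}$, and I would assemble these into
\[
f\colon\bigvee_{i=1}^{n}\Sigma^{d_i}BP\langle 2\rangle\longrightarrow tmf_{(p)}.
\]
By construction $f$ realizes the basis isomorphism on homotopy groups, so it is an equivalence of $p$-local spectra, which is the asserted splitting.

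The hard part will be converting the ring map $BP\to tmf_{(p)}$ into an honest $BP\langle 2\rangle$-module structure: one must either show $v_n\mapsto 0$ for $n\ge 3$ on the nose, perhaps only after normalizing the orientation, which is delicate, or sidestep this homologically by computing $H^\ast(tmf;\F_p)$ as a module over the mod $p$ Steenrod algebra, identifying it with a direct sum of suspensions of $H^\ast(BP\langle 2\rangle;\F_p)$, and then using the Adams spectral sequence to construct and detect the generating maps. The single genuinely arithmetic input either way is that the images of $v_1$ and $v_2$ form a system of parameters, i.e.\ that the supersingular locus is nonempty of the expected dimension and carries a height-$2$ formal group; everything downstream is formal commutative algebra together with the obstruction-free assembly of $f$.
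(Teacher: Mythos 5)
The paper does not prove this theorem; it is quoted from Hopkins--Miller and Rezk's notes, so there is no in-text argument to compare against. Judged on its own terms, your outline gets the arithmetic heart right: at $p\ge 5$ one has $\pi_\ast tmf_{(p)}\cong\Z_{(p)}[c_4,c_6]$, the images of $v_1$ and $v_2$ (the Hasse invariant $E_{p-1}$ and an analogue in degree $2(p^2-1)$) together with $p$ form a homogeneous system of parameters because the nodal locus is ordinary and the supersingular locus is zero-dimensional of height exactly $2$, and then Cohen--Macaulayness over the regular ring $\Z_{(p)}[v_1,v_2]$ gives freeness. Once one has a $BP\langle 2\rangle$-module structure, assembling the basis into a $\pi_\ast$-isomorphism $\bigvee\Sigma^{d_i}BP\langle 2\rangle\to tmf_{(p)}$ does finish the proof.

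The genuine gap is the step you yourself flag: producing the module structure. The justification offered --- ``the formal group has height at most $2$, so after a change of coordinates the map kills $v_n$ for $n\ge 3$'' --- does not work. Height $\le 2$ is the statement that $(p,v_1,v_2)$ generate the unit ideal on the nonsingular locus; it says nothing about the vanishing of $v_3,v_4,\dots$, which for a formal group over a base like $\mathcal M_{\overline{ell}}$ need not be killable by any choice of $p$-typical coordinate. And even granting $v_n\mapsto 0$ on homotopy, factoring a homotopy ring map $BP\to tmf_{(p)}$ through the quotient $BP/(v_3,v_4,\dots)$ as a map of module spectra requires structured multiplications and an obstruction argument, not just vanishing in $\pi_\ast$. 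Your fallback is the right fix and is in fact the standard proof in the cited sources: show that $H^\ast(tmf;\F_p)$ is a direct sum of suspensions of $\A/\!/E(Q_0,Q_1,Q_2)$ (a Margolis-homology computation, using that $\pi_\ast tmf_{(p)}$ is even and torsion-free), realize each summand by a map from a suspension of $BP\langle 2\rangle$ via the Adams spectral sequence, and check the wedge is an equivalence. I would restructure the writeup so that this is the primary route and the system-of-parameters computation is the input identifying the suspension degrees $d_i$, rather than resting the argument on a $v_{\ge 3}$-vanishing claim that is unsupported.
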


Since $H^\ast(BP\langle 2\rangle)=\A/\!/ E(Q_0,Q_1,Q_2)$, where $Q_i$ is the $i^{\text{th}}$ Milnor primitive, standard change-of-rings arguments allow us to compute each summand of $tmf_\ast X$ arising from this splitting.

\begin{thm}[\cite{GreenBook}]
If $X$ is a finite spectrum then there is an Adams spectral sequence of the form
\[
E_2=\Ext_{E(Q_0,Q_1,Q_2)}^{s,t}\big(\tilde{H}^\ast(X;\F_p),\F_p\big)\Longrightarrow \widetilde{BP\langle 2\rangle}_{t-s} X_{p}^{\wedge}
\]
\end{thm}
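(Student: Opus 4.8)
The plan is to realize this spectral sequence as the classical mod $p$ Adams spectral sequence for the spectrum $BP\langle 2\rangle\wedge X$, after simplifying the $E_2$-term by a change of rings. Since $X$ is finite and $BP\langle 2\rangle$ is connective of finite type, $BP\langle 2\rangle\wedge X$ is again connective of finite type, so its Adams spectral sequence
\[
\Ext_{\A}^{s,t}\big(\tilde H^\ast(BP\langle 2\rangle\wedge X;\F_p),\F_p\big)\Longrightarrow\pi_{t-s}\big((BP\langle 2\rangle\wedge X)_p^{\wedge}\big)=\widetilde{BP\langle 2\rangle}_{t-s}X_p^{\wedge}
\]
converges strongly; it then remains only to identify the $E_2$-page with the stated $\Ext$ group.

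First I would apply the K\"unneth theorem over $\F_p$ to identify $\tilde H^\ast(BP\langle 2\rangle\wedge X;\F_p)\cong H^\ast(BP\langle 2\rangle;\F_p)\otimes\tilde H^\ast(X;\F_p)$ as an $\A$-module with the diagonal action. Substituting the identification $H^\ast(BP\langle 2\rangle;\F_p)=\A/\!/E(Q_0,Q_1,Q_2)$ recorded above turns this into $\big(\A/\!/E(Q_0,Q_1,Q_2)\big)\otimes\tilde H^\ast(X;\F_p)$. Next I would invoke the standard ``untwisting'' isomorphism of $\A$-modules, $\big(\A/\!/B\big)\otimes N\cong\A\otimes_B N$, valid for any sub-Hopf-algebra $B\subseteq\A$ and any $\A$-module $N$ (on the right-hand side $\A$ acts through its left factor and $N$ is restricted along $B\hookrightarrow\A$). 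Finally, since $E(Q_0,Q_1,Q_2)$ is a sub-Hopf-algebra of $\A$, the Milnor--Moore theorem makes $\A$ free as a right $E(Q_0,Q_1,Q_2)$-module, so Shapiro's lemma (equivalently, the collapse of the change-of-rings spectral sequence) gives
\[
\Ext_{\A}^{s,t}\big(\A\otimes_{E(Q_0,Q_1,Q_2)}N,\F_p\big)\cong\Ext_{E(Q_0,Q_1,Q_2)}^{s,t}(N,\F_p),
\]
which with $N=\tilde H^\ast(X;\F_p)$ is exactly the desired $E_2$-term.

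The convergence statement and the K\"unneth step are routine, given that $X$ is finite. The only point that requires care is the untwisting isomorphism: one must check that it intertwines the diagonal $\A$-action on $(\A/\!/B)\otimes N$ with the extended-module action on $\A\otimes_B N$, so that the subsequent change of rings is legitimate. All of these facts are classical and assembled in the cited reference, so in practice this theorem is a bookkeeping exercise rather than a new computation.
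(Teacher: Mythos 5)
Your proposal is correct and is precisely the ``standard change-of-rings argument'' the paper invokes (and delegates to the cited reference): run the classical Adams spectral sequence for $BP\langle 2\rangle\wedge X$, apply K\"unneth and $H^\ast(BP\langle 2\rangle)=\A/\!/E(Q_0,Q_1,Q_2)$, untwist, and use the freeness of $\A$ over the exterior subalgebra to change rings. No discrepancy with the paper's approach.
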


At the prime $2$, we will also have need of the $ko$-homology of $K(\Z,4)$.
Since $H^\ast(ko)=\A/\!/\A(1)$, where $\A(1)$ is the subalgebra of the Steenrod algebra generated by $Sq^1$ and $Sq^2$, a change-of-rings argument similar to that for $tmf$ allows us to compute $ko$-homology.

\begin{thm}
If $X$ is a finite spectrum, then there is a spectral sequence of the form
\[
E_2=\Ext_{\A(1)}^{s,t}\big(\tilde{H}^\ast(X;\F_2),\F_2\big)\Longrightarrow \widetilde{ko}_{t-s} X_{2}^\wedge.
\]
\end{thm}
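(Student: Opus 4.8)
The plan is to realize this as the classical mod~$2$ Adams spectral sequence for the spectrum $ko\wedge X$, and then to identify its $E_2$-page by a change-of-rings argument exactly parallel to the one that produces Theorem~\ref{thm:tmfASSat2} for $tmf$, with the subalgebra $\A(1)$ replacing $\A(2)$.

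First I would set up the input. Since $ko$ is connective and of finite type and $X$ is finite, $ko\wedge X$ is a connective spectrum of finite type, so its mod~$2$ Adams spectral sequence converges strongly to $\pi_{t-s}$ of the $2$-completion of $ko\wedge X$, that is, to $\widetilde{ko}_{t-s}X_2^\wedge$. Its $E_2$-page is $\Ext_{\A}^{s,t}\big(\tilde{H}^\ast(ko\wedge X;\F_2),\F_2\big)$.

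Next I would compute this cohomology as a module over $\A$. Finiteness of $X$ makes the K\"unneth map an isomorphism of $\A$-modules $\tilde{H}^\ast(ko\wedge X;\F_2)\cong H^\ast(ko;\F_2)\otimes\tilde{H}^\ast(X;\F_2)$, and $H^\ast(ko;\F_2)=\A/\!/\A(1)=\A\otimes_{\A(1)}\F_2$ is a classical computation. By the Milnor--Moore theorem $\A$ is free as a right $\A(1)$-module, so $\A/\!/\A(1)$ is an extended module, and the standard untwisting isomorphism for Hopf algebras gives
\[
\big(\A\otimes_{\A(1)}\F_2\big)\otimes\tilde{H}^\ast(X;\F_2)\cong\A\otimes_{\A(1)}\big(\tilde{H}^\ast(X;\F_2)\big|_{\A(1)}\big)
\]
as $\A$-modules. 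Change of rings along $\A(1)\hookrightarrow\A$, which is legitimate because $\A$ is free, hence flat, over $\A(1)$, then produces
\[
\Ext_{\A}^{s,t}\big(\A\otimes_{\A(1)}\tilde{H}^\ast(X;\F_2),\F_2\big)\cong\Ext_{\A(1)}^{s,t}\big(\tilde{H}^\ast(X;\F_2),\F_2\big),
\]
the asserted $E_2$-term.

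The only points that genuinely use the hypothesis that $X$ is finite are the K\"unneth identification of $\tilde{H}^\ast(ko\wedge X)$ as an $\A$-module and the strong convergence of the spectral sequence to $2$-completed $ko$-homology; both are routine for connective spectra of finite type, so I do not expect a real obstacle. Indeed the whole argument is a transcription of the $tmf$ case recorded in Theorem~\ref{thm:tmfASSat2}, with $\A(2)$ and $H^\ast(tmf)=\A/\!/\A(2)$ replaced throughout by $\A(1)$ and $H^\ast(ko)=\A/\!/\A(1)$.
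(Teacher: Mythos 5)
Your argument is correct and is exactly the one the paper intends: the paper's justification is the single sentence that since $H^\ast(ko)=\A/\!/\A(1)$, ``a change-of-rings argument similar to that for $tmf$'' applies, and your proposal simply fills in that standard argument (classical Adams spectral sequence for $ko\wedge X$, K\"unneth, the untwisting/shearing isomorphism, and Cartan--Eilenberg change of rings using freeness of $\A$ over $\A(1)$). No discrepancy with the paper's approach.
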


In cohomology, the effect of the canonical map $tmf\to ko$ is the quotient
\[
H^\ast(ko)=\A/\!/\A(1)\to \A/\!/\A(2)=H^\ast(tmf).
\]
This means that the induced map on Adams spectral sequences is given on $E_2$ terms by the canonical map
\[
\Ext_{\A(2)}\big(\tilde{H}^\ast(X;\F_2),\F_2\big)\to\Ext_{\A(1)}\big(\tilde{H}^\ast(X;\F_2),\F_2\big)
\]
induced by the inclusion of $\A(1)$ into $\A(2)$.

Many of our Adams spectral sequence arguments will be made more clear with pictures of the corresponding $E_2$ pages. In all of these charts, the horizontal axis represents $t-s$, the difference between the internal grading and the $\Ext$ degree, and this corresponds to the stem of the target. This means that all of the groups in a column reassemble to give a single homotopy group. The vertical direction represents $s$, the $\Ext$ degree, and a $d_r$ differential decreases $t-s$ by $1$ and increases $s$ by $r$. Additionally, it the pictures that follow, a dot represents a copy of $\Z/p$, and vertical lines connecting dots represents multiplication by a class $v_0$ from the Adams spectral sequence of the sphere. This class detects multiplication by $p$, reflecting a non-trivial additive extension between the linked copies of $\Z/p$. In particular, a class which is $v_0$-torsion free represents a copy of $\Z_p$.

Since $MString$ and $BE_8$ are finite type, we know that copies of $\Z_p$ in the $p$-completion arise from copies of $\Z_{(p)}$ in the localization. We will therefore blur the distinction in what follows, stating without further comment that $v_0$-towers yield copies of $\Z_{(p)}$.

\subsection{Cohomology of $K(\Z,4)$}
The final ingredient is the cohomology of Eilenberg-MacLane spaces, worked out by Cartan and Serre \cite{CartanKpin, SerreKpin}.

\begin{thm}
The cohomology of $K(\Z,n)$ with coefficients in $\F_p$ is the free graded commutative unstable algebra over the Steenrod algebra generated by a class $\iota_n$ in degree $n$ subject to the relation that $\beta\iota_n=0$.
\end{thm}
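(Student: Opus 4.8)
The plan is to prove this by the classical spectral-sequence argument going back to Borel and Serre, computing $H^\ast\big(K(\Z,n);\F_p\big)$ from $H^\ast\big(K(\Z,n-1);\F_p\big)$ by means of the Serre spectral sequence of the path--loop fibration
\[
K(\Z,n-1)\longrightarrow PK(\Z,n)\longrightarrow K(\Z,n).
\]
The total space is contractible, so the spectral sequence converges to $H^\ast\big(PK(\Z,n);\F_p\big)=\F_p$; hence in positive degrees every class of the fiber is either hit by a differential or transgresses. The base case is $n=1$: here $K(\Z,1)\simeq S^1$, so $H^\ast(S^1;\F_p)=E(\iota_1)$, and since every nonempty admissible monomial of Steenrod operations has excess at least $1$ while $\iota_1$ sits in degree $1$, this exterior algebra is exactly the free graded commutative unstable algebra on a degree-$1$ generator killed by the Bockstein. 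It therefore suffices to propagate the stated description from $K(\Z,n-1)$ to $K(\Z,n)$.

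For the inductive step, the fundamental class $\iota_{n-1}$ is transgressive and transgresses to $\iota_n$; by Kudo's transgression theorem the transgression commutes with the action of the Steenrod algebra (and with the Bockstein), so every admissible monomial $\theta\iota_{n-1}$ is transgressive with transgression $\theta\iota_n$. Assuming the inductive description of $H^\ast\big(K(\Z,n-1);\F_p\big)$, a bookkeeping of admissible monomials by excess exhibits a \emph{simple system of transgressive generators} of this algebra: namely the admissible $\theta\iota_{n-1}$ of excess at most $n-1$ whose last operation is not the Bockstein. Indeed, the free graded commutative generators are those of excess strictly below $n-1$, and the instability relation $Sq^{|x|}x=x^2$ (and its odd-primary analogue for $p$-th powers of even classes) identifies their higher $p$-th powers with exactly the admissible monomials of excess $n-1$. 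Borel's theorem, applied to a fibration with acyclic total space and a simple system of transgressive generators on the fiber, then forces $H^\ast\big(K(\Z,n);\F_p\big)$ to be the free graded commutative algebra on the transgressions, i.e.\ on the admissible monomials $\theta\iota_n$ of excess at most $n-1$ whose last operation is not the Bockstein.

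It remains to recognize this as the free graded commutative unstable algebra on $\iota_n$ subject to $\beta\iota_n=0$. Since an integer $e$ satisfies $e\le n-1$ exactly when $e<n$, the generating set produced above is precisely the set of admissible $\theta\iota_n$ of excess less than $n$ with last operation not the Bockstein, which is by definition a basis of the free unstable algebra on a degree-$n$ class modulo the relation $\beta\iota_n=0$; and that relation does hold on $\iota_n$, since $\iota_n$ is the mod-$p$ reduction of the integral fundamental class. I expect the only real work to be in the inductive step: controlling excess and admissibility under transgression and matching the excess-$(n-1)$ monomials with $p$-th powers, which at odd primes additionally demands care with the interaction of $\beta$ and the reduced powers $P^i$ and with the even/odd degree split implicit in ``graded commutative''. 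The spectral-sequence inputs --- Kudo's theorem and Borel's theorem --- are standard and may be cited.
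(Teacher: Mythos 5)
The paper offers no proof of this theorem---it is quoted from Cartan and Serre---so you are reconstructing the classical argument, and your outline (induction over the path--loop fibrations, Kudo's transgression theorem, Borel's theorem on simple systems of transgressive generators, bookkeeping by excess) is the right skeleton. At $p=2$ the sketch is essentially complete. At odd primes, however, the step ``Borel's theorem \dots forces $H^\ast\big(K(\Z,n);\F_p\big)$ to be the free graded commutative algebra on the transgressions'' is false as stated, and the discrepancy is precisely the generators with a leading Bockstein. Concretely, take $p=3$ and the fibration $K(\Z,2)\to PK(\Z,3)\to K(\Z,3)$: the simple system of transgressive generators of $H^\ast\big(K(\Z,2);\F_3\big)=\F_3[\iota_2]$ is $\{\iota_2,\ \iota_2^3=\mathcal P^1\iota_2,\ \iota_2^9=\mathcal P^3\mathcal P^1\iota_2,\dots\}$, whose transgressions $\iota_3,\mathcal P^1\iota_3,\dots$ all lie in odd degree, so ``the free graded commutative algebra on the transgressions'' is an exterior algebra. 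The actual answer also contains the polynomial generator $\beta\mathcal P^1\iota_3\in H^8\big(K(\Z,3);\F_3\big)$, which is admissible of excess less than $3$ and does not end in $\beta$, yet is the transgression of nothing: its would-be preimage $\beta\mathcal P^1\iota_2=\beta(\iota_2^{\,3})$ vanishes in the fiber.

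These missing generators are supplied by the second half of Kudo's theorem, the transpotence: if $x$ is an even-degree transgressive class in the fiber with $\tau(x)=y$, then $x^{p-1}\otimes y$ survives and supports a differential onto $-\beta\mathcal P^{|x|/2}y$ in the base. The odd-primary Borel theorem you actually need says that $H^\ast(B)$ is free graded commutative on the transgressions $\tau(x_i)$ \emph{together with} the transpotences $\varphi(x_j)=-\beta\mathcal P^{|x_j|/2}\tau(x_j)$ of the even-degree members of the simple system, and the bookkeeping must then verify that these transpotence classes are exactly the admissible monomials of excess less than $n$ whose leftmost operation is $\beta$ and which evaluate to $\beta$ of a $p$-th power (hence to zero) on $\iota_{n-1}$. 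Relatedly, your identification of the excess-$(n-1)$ admissible monomials with higher $p$-th powers silently discards these same leading-$\beta$ monomials. You flag that odd primes ``demand care with $\beta$,'' but as written the main engine of the induction would output the wrong answer already for $K(\Z,3)$ at $p=3$; supplying the transpotence is the missing idea. None of this affects the $p=2$ case, where there is no transpotence and Borel's theorem holds in the clean form you invoke.
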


In particular, $H^\ast(K(\Z,4);\F_p)$ is generated by classes $i_4$ and $\mathcal P^I \iota_4$ for all admissible $I$ of excess less than $4$ that do not end in $\beta$. This is a very harsh restriction.

\begin{cor}
For $p>5$, the only cohomology classes of dimension less than $16$ are $\iota_4$, $\iota_4^2$, and $\iota_4^3$.

For $p=5$, there are additionally the classes $\mathcal P^1 \iota_4$ and $\beta\mathcal P^1\iota_4$ in degrees $12$ and $13$ respectively.

For $p=3$, there are classes $\iota_4$, $\mathcal P^1\iota_4$, $\beta\mathcal P^1\iota_4$, $\iota_4^2$, $\iota_4^3=\mathcal P^2\iota_4$, $\iota_4\mathcal P^1\iota_4$, and $\iota_4\beta\mathcal P^1\iota_4$.

For $p=2$, there are classes $\iota_4$, $Sq^2\iota_4$, $Sq^3\iota_4$, $\iota_4^2=Sq^4\iota_4$, $Sq^4Sq^2\iota_4$, $\iota_4Sq^2\iota_4$, $Sq^5Sq^2\iota_4$, $\iota_4Sq^3\iota_4$, $(Sq^2\iota_4)^2=Sq^6Sq^2\iota_4$, $\iota_4^3$, $Sq^6Sq^3\iota_4$, $Sq^2\iota_4Sq^3\iota_4$, $(Sq^3\iota_4)^2=Sq^7Sq^3\iota_4$, $\iota_4Sq^4Sq^2\iota_4$, $\iota_4^2Sq^2\iota_4$ $\iota_4^2Sq^3\iota_4$, and $\iota_4Sq^5Sq^2\iota_4$.
\end{cor}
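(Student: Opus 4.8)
The plan is to extract the statement directly from the Cartan--Serre structure theorem just quoted. By that theorem, $H^\ast(K(\Z,4);\F_p)$ is the free graded-commutative unstable algebra over $\A$ on a degree-$4$ class $\iota_4$ with $\beta\iota_4=0$; unpacking this, its underlying algebra is the free graded-commutative algebra (polynomial on the even-degree generators, exterior on the odd-degree ones, everything polynomial when $p=2$) on the classes $\mathcal P^I\iota_4$ where $I$ is admissible of excess less than $4$ and does not end in $\beta$ --- replacing $\mathcal P^I$ by $Sq^I$ and $\beta$ by $Sq^1$ when $p=2$. Since $\iota_4$ has degree $4$, such a generator lands below degree $16$ exactly when $\mathcal P^I$ has degree below $12$. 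So I would first enumerate the admissible monomials of degree below $12$ with excess below $4$ not ending in $\beta$, and then list all monomials in the resulting generators of total degree below $16$.

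For $p>5$ one has $|\mathcal P^1|=2(p-1)\ge 12$, so no nontrivial operation is in range and the generators below degree $16$ are just $\iota_4,\iota_4^2,\iota_4^3$ (note $\iota_4^4$ already sits in degree $16$). For $p=5$ the admissibles of degree below $12$ not ending in $\beta$ are $\mathcal P^1$ (degree $8$) and $\beta\mathcal P^1$ (degree $9$), of excess $2$ and $3$; they contribute $\mathcal P^1\iota_4$ and $\beta\mathcal P^1\iota_4$ in degrees $12$ and $13$, and no product of these with another generator stays below degree $16$. For $p=3$ the admissibility and excess constraints leave only $\mathcal P^1$ (degree $4$) and $\beta\mathcal P^1$ (degree $5$); forming products of $\iota_4$, $\mathcal P^1\iota_4$, $\beta\mathcal P^1\iota_4$ below degree $16$ and using the instability relation $\mathcal P^2\iota_4=\iota_4^3$ gives the stated seven classes. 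For $p=2$ I would organize the admissible $Sq^I$ of degree below $12$, excess below $4$, not ending in $Sq^1$, by length: length one yields $Sq^2$ and $Sq^3$; length two yields $Sq^4Sq^2$, $Sq^5Sq^2$, and $Sq^6Sq^3$; and any admissible of length at least three already has degree at least $14$, hence is out of range. Using $Sq^4\iota_4=\iota_4^2$, $Sq^6Sq^2\iota_4=(Sq^2\iota_4)^2$, and $Sq^7Sq^3\iota_4=(Sq^3\iota_4)^2$ to name the excess-$4$ admissibles, I would then write out every monomial in $\iota_4,Sq^2\iota_4,Sq^3\iota_4,Sq^4Sq^2\iota_4,Sq^5Sq^2\iota_4,Sq^6Sq^3\iota_4$ of total degree below $16$; this reproduces the displayed list.

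The only delicate part is the $p=2$ bookkeeping: one must compute the excess $i_1-i_2-\cdots-i_k$ of each admissible monomial correctly, recognize which excess-$4$ admissibles are forced by the squaring rule $Sq^{|y|}y=y^2$ to coincide with squares of lower generators rather than to produce new classes, and then exhaustively but finitely form the products, taking particular care in the crowded degrees $10$ through $15$ where several classes pile up. At the odd primes the only points to keep in mind are the exterior behavior of the odd-degree generators and the fact that any admissible ending in $\beta$ kills $\iota_4$; with these in hand the counts are immediate.
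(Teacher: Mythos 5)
Your proposal is correct and follows exactly the route the paper intends: the corollary is stated as an immediate consequence of the Cartan--Serre description, with the generators being the admissible $\mathcal P^I\iota_4$ of excess less than $4$ not ending in $\beta$, and the enumeration of these generators and their products in degrees below $16$ is precisely the (unwritten) bookkeeping you carry out. Your lists, including the identification of the excess-$4$ classes with squares and the $17$ classes at $p=2$, match the paper's.
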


We remark that for $p>5$, the Bockstein spectral sequence collapses, showing that the integral cohomology of $K(\Z,4)$ is $p$-torsion free through dimension $15$.

The modules for $p=3$ and $p=2$ are best understood via pictures. In these, the vertical direction indicates the dimension, and dots represent basis elements. In the picture for $p=3$, curved lines represent the action of $\mathcal P^1$ and straight lines represent the action of $\beta$. The module is presented in Figure~\ref{fig:Moduleat3}. For $p=2$, Theorem~\ref{thm:tmfASSat2} shows us that we need only understand the cohomology as a module over $\A(2)$. For this, straight lines indicate $Sq^1$, curved lines indicate $Sq^2$ and brackets indicate $Sq^4$. The module (with a slightly nicer basis) is depicted in Figure~\ref{fig:Moduleat2}.

\begin{figure}[ht]
\centering
\subfigure[$p=3$]{
\label{fig:Moduleat3}
\includegraphics{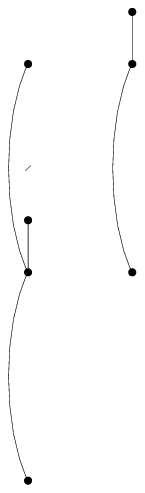}}
\hspace{3cm}
\subfigure[$p=2$]{
\label{fig:Moduleat2}
\includegraphics{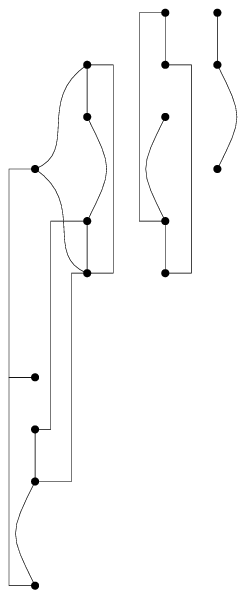}}
\caption{$H^k(K(\Z,4);\F_p)$ for $k<16$ as an $\A$-module}
\end{figure}

This collection of data is sufficient to carry out almost all of the required computations.

\section{The Groups $\Oo_k^{String}(BE_8)$ for $k<15$}

\subsection{Computation with $6$ Inverted}

When $6$ is inverted, the spectrum $tmf$ has no torsion. This makes running the Atiyah-Hirzebruch spectral sequence for $\widetilde{tmf}_k K(\Z,4)$ much simpler.

\begin{thm}\label{thm:BE8odd}
For $k<15$, we have
\[
\Oo_k^{String}BE_8\otimes\Z[\tfrac{1}{6}]=\begin{cases}
\Z[\tfrac{1}{6}] & \text{when }k=4, 8, \\
\Z[\tfrac{1}{6}]\times\Z[\tfrac{1}{6}] & \text{when }k=12, \\
0 & \text{otherwise.}
\end{cases}
\]
\end{thm}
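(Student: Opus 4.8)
The plan is to use the Corollary to replace $\Oo_k^{String}BE_8\otimes\Z[\tfrac{1}{6}]$ by $\widetilde{tmf}_kK(\Z,4)\otimes\Z[\tfrac{1}{6}]$ for $k<15$, and then to run the Atiyah--Hirzebruch spectral sequence
\[
E^2_{s,t}=\widetilde{H}_s\big(K(\Z,4);\pi_t\, tmf[\tfrac{1}{6}]\big)\Longrightarrow \widetilde{tmf}_{s+t}K(\Z,4)\otimes\Z[\tfrac{1}{6}].
\]
After inverting $6$ the coefficients form the torsion-free ring $\pi_\ast\, tmf[\tfrac{1}{6}]=\Z[\tfrac{1}{6}][c_4,c_6]$ with $|c_4|=8$ and $|c_6|=12$; through total degree $14$ it is nonzero only in degrees $0$, $8$, and $12$, all divisible by $4$.

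First I would extract $\widetilde{H}_\ast(K(\Z,4);\Z[\tfrac{1}{6}])$ in the relevant range from the cohomology computations recorded above. For $p>5$ this cohomology is torsion free through degree $15$, and at $p=5$ the Bockstein on $\mathcal P^1\iota_4$ is nonzero, so $\beta\mathcal P^1\iota_4\neq 0$ in $H^{13}$, the only $5$-torsion class through degree $15$. Universal coefficients then gives, through degree $14$, homology equal to $\Z[\tfrac{1}{6}]$ in degrees $4$ and $8$, equal to $\Z[\tfrac{1}{6}]\oplus\Z/5$ in degree $12$, and zero otherwise; in particular it is again concentrated in degrees divisible by $4$.

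Next I would note that there is no room for differentials in this range: a $d_r$ sends bidegree $(s,t)$ to $(s-r,t+r-1)$, so for both ends to be nonzero one needs $s\equiv s-r\equiv 0$ and $t\equiv t+r-1\equiv 0\pmod 4$, forcing both $r\equiv 0$ and $r\equiv 1\pmod 4$. Hence $E^2=E^\infty$ through stem $14$, and reading off columns gives $\Z[\tfrac{1}{6}]$ in stems $4$ and $8$ (carried by $\iota_4$ and $\iota_4^2$), zero in stem $0$ and in every stem $\leq 14$ not divisible by $4$, and in stem $12$ an extension
\[
0\to\Z[\tfrac{1}{6}]\to \widetilde{tmf}_{12}K(\Z,4)\otimes\Z[\tfrac{1}{6}]\to\Z[\tfrac{1}{6}]\oplus\Z/5\to 0,
\]
with the sub generated by $c_4\iota_4$ and the quotient by $\iota_4^3$ together with the $5$-torsion of $\widetilde{H}_{12}$.

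The main obstacle is exactly this extension: the theorem claims the answer $\Z[\tfrac{1}{6}]^2$, so the spurious $\Z/5$ must be absorbed, equivalently $c_4\cdot\iota_4$ must be divisible by $5$. I would settle this at the prime $5$ using the $tmf$-Adams spectral sequence coming from $tmf_{(5)}=\bigvee BP\langle 2\rangle$ and the change of rings to $E(Q_0,Q_1,Q_2)$, which in this range is $\Ext$ over $E(Q_0,Q_1)$. The key structural fact is that $Q_1\iota_4=\pm\beta\mathcal P^1\iota_4=\mp Q_0\mathcal P^1\iota_4$ in $H^\ast(K(\Z,4);\F_5)$, so $\iota_4$, $\mathcal P^1\iota_4$ and $\beta\mathcal P^1\iota_4$ span a single indecomposable $E(Q_0,Q_1)$-module summand; from the short exact sequence with submodule generated by $\beta\mathcal P^1\iota_4$, one computes that this summand contributes a full $v_0$-tower in stem $12$ starting in Adams filtration $0$ --- a copy of $\Z_{(5)}$, not a $\Z/5$ --- and that $v_0$ times the bottom of that tower is precisely the $v_1$-multiple detecting $c_4\iota_4$. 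Together with the $v_0$-tower carried by $\iota_4^3$ this gives $\widetilde{tmf}_{12}K(\Z,4)\otimes\Z_{(5)}=\Z_{(5)}^2$ with $c_4\iota_4$ equal to $\pm 5$ times one of the two generators --- exactly the hidden extension needed. For $p>5$ the Atiyah--Hirzebruch computation needs no correction, and rationally the result is immediate from $\pi_\ast\, tmf\otimes\Q=\Q[c_4,c_6]$; assembling the local answers gives the theorem.
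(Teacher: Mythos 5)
Your proposal is correct and follows essentially the same route as the paper: the Atiyah--Hirzebruch spectral sequence collapses for degree reasons away from $5$, and the only subtlety is the potential $\Z/5$ in dimension $12$, which the paper likewise resolves by switching to the Adams spectral sequence and observing that $\beta\mathcal P^1$ corresponds to $Q_1$, which detects $v_1=c_4$, so that $c_4\iota_4$ is $5$ times a generator and the group is torsion free. Your $E(Q_0,Q_1)$-module analysis is just a fully worked-out version of the paper's one-sentence argument for that step.
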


\begin{proof}
Localized at primes larger than $5$, there is no torsion in the integral homology of $K(\Z,4)$ in our range. For degree reasons, the Atiyah-Hirzebruch spectral sequence then collapses, giving the result.

At $p=5$, the Atiyah-Hirzebruch spectral sequence becomes less transparent:
\[
H_{12}(K(\Z,4);\Z[\tfrac{1}{6}])=\Z/5\oplus \Z[\tfrac{1}{6}],
\]
and this propagates to give a possible $5$-torsion class in $\Oo_{12}^{String} K(\Z,4)$. Using instead the Adams spectral sequence, we see that in fact $\Oo_{12}^{String} K(\Z,4)[\tfrac{1}{6}]$ is torsion free, since the class $\beta \mathcal P^1$ is $Q^1$ on integral classes and detects $v_1$. At $p=5$, the class $v_1$ is the class $c_4$ in $tmf$ \cite{Rezk512Notes}, showing that the $12^{\text{th}}$ String bordism group is in fact torsion free.
\end{proof}

\begin{remark}
The possible $5$-torsion in $\Oo_{12}^{String} K(\Z,4)$ is exactly analogous to the possible $3$-torsion Stong encountered computing $\Oo_{8}^{Spin} K(\Z,4)$. Stong elegantly avoided the use of the Adams spectral sequence by appealing to characteristic classes. A similar argument applies here, relating the mod $5$ reduction of $p_2$ to the first Wu class.
\end{remark}

\subsection{Computation at $p=3$}

The space $K(\Z,4)$ splits stably at $p=3$, due to the presence of the automorphism given by multiplication by $-1$. This acts on $i_4$ by $-1$, and the stable splitting is reflected in the cohomology by splitting the cohomology into $\pm 1$ eigenspaces. In particular, the two summands of Figure~\ref{fig:Moduleat3} arise from different stable summands, and therefore they do not interact in the Adams spectral sequence.

The relevant $\Ext$ groups for the two modules making up $H^\ast(K(\Z,4);\F_3)$ are easy to compute using the long exact sequences in $\Ext$ induced by the inclusions of the skeleta \cite{HilltmfBSigma3}, and the analogue of the Adams $E_2$ term is given in Figure~\ref{fig:Extat3}.

\begin{figure}[ht]
\centering
\includegraphics{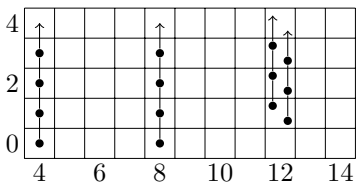}
\caption{An Adams $E_2$ term for $\widetilde{tmf}_k K(\Z,4)$, $k<15$}
\label{fig:Extat3}
\end{figure}

We see that for degree reasons, there are no differentials and no possible extensions. In particular, there are no torsion summands in this range, and we conclude that at $3$, the results are essentially the same as for large primes.

\begin{thm}\label{thm:BE8at3}
For $k<15$, we have that
\[
\Oo_k^{String} BE_8\otimes\Z_{(3)}=\begin{cases}
\Z_{(3)} & \text{when }k=4,8, \\
\Z_{(3)}^2 & \text{when }k=12, \\
0 & \text{otherwise.}
\end{cases}
\]
\end{thm}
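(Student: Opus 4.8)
The plan is to read the answer off the Adams spectral sequence of Theorem~\ref{thm:tmfASSat3}, proceeding just as in the case with $6$ inverted. By the identification $\Oo_k^{String} BE_8\cong\widetilde{tmf}_k K(\Z,4)$ valid for $k<15$, it suffices to compute $\widetilde{tmf}_k K(\Z,4)\otimes\Z_{(3)}$ in this range. The first reduction is to use the stable splitting of $K(\Z,4)$ at $3$: the automorphism $[-1]$ acts on $\iota_4$ by $-1$, and by naturality of the Steenrod operations and multiplicativity it acts by $\pm1$ on each monomial basis element of $H^\ast(K(\Z,4);\F_3)$. Thus this cohomology is the direct sum of the two eigenspaces drawn in Figure~\ref{fig:Moduleat3}, which are the $\F_3$-cohomologies of two stable wedge summands that do not interact, and the Adams spectral sequence splits accordingly as a direct sum. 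I would then analyze the two summands separately.

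Next I would compute $\Ext_{\tilde{\A}(1)}^{s,t}$ of each of these two $\tilde{\A}(1)$-modules for $t-s<15$. Each module is built from a handful of cells whose attaching data are precisely the $\mathcal P^1$-, $\beta$-, and $\iota_4$-multiplications visible in Figure~\ref{fig:Moduleat3}; filtering by skeleta and running the induced long exact sequences in $\Ext_{\tilde{\A}(1)}$, built up from $\Ext_{\tilde{\A}(1)}(\F_3,\F_3)$, assembles the $E_2$-page. The outcome is the chart of Figure~\ref{fig:Extat3}: through total degree $14$ it consists of a single $v_0$-tower in stem $4$ (off $\iota_4$), one in stem $8$ (off $\iota_4^2$), and two in stem $12$, with no classes in any other bidegree of the range.

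Granting this $E_2$-page, the rest is formal. A $d_r$ differential would need nonzero groups in bidegrees $(s,t-s)$ and $(s+r,t-s-1)$, but in our range the only nonzero bidegrees are the isolated $v_0$-towers in stems $4$, $8$, and $12$; since these lie in distinct stems and $v_0$-towers are permanent cycles, no differential is possible, and for the same reason there is no room for a hidden additive extension linking different stems. Hence $E_2=E_\infty$ in this range, each $v_0$-tower contributes a copy of $\Z_{(3)}$, and the stated groups follow, with the vanishing in every remaining degree being immediate.

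The step carrying the real content is the $\Ext_{\tilde{\A}(1)}$ computation. The subtlety is that $\tilde{\A}(1)$ is the \emph{exotic} Hopf algebra of Theorem~\ref{thm:tmfASSat3}, with the twisted coproduct $\psi(a_2)=a_2\otimes1+1\otimes a_2+\xi_1\otimes\tau_1-\xi_1^2\otimes\tau_0$, rather than a subalgebra of the Steenrod algebra, so the module structures on the eigenspaces and the connecting homomorphisms in the skeletal long exact sequences must be identified with care. In particular one has to check that the $v_0$-towers in stems $4$, $8$, and $12$ are genuinely infinite — equivalently, that none of the potential $3$-torsion coming from the $\alpha$- and $\beta$-families in $\Ext_{\tilde{\A}(1)}(\F_3,\F_3)$ survives in this range — which is exactly what rules out any torsion summands. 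Once Figure~\ref{fig:Extat3} is justified by the long exact sequence methods of \cite{HilltmfBSigma3}, the theorem follows as above.
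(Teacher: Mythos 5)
Your proposal is correct and follows essentially the same route as the paper: reduce to $\widetilde{tmf}_\ast K(\Z,4)$, split $K(\Z,4)$ stably at $3$ into $\pm1$ eigenspaces of the $[-1]$ automorphism, compute $\Ext_{\tilde{\A}(1)}$ of each summand by skeletal long exact sequences as in \cite{HilltmfBSigma3}, and observe that the resulting $E_2$ term (three isolated $v_0$-towers in stems $4$, $8$, and two in stem $12$) admits no differentials or extensions for degree reasons. You also correctly identify that the substantive content lives in the $\Ext_{\tilde{\A}(1)}$ chart itself, which the paper likewise delegates to Figure~\ref{fig:Extat3} rather than working out in the text.
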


\subsection{Computation at $p=2$}

Using a program written by Robert Bruner \cite{BrunerExt90s}, we compute
\[
\Ext_{\A(2)}^{s,t}\big(\tilde{H}^\ast(K(\Z,4);\F_2),\F_2\big).
\]
The result is depicted in Figure~\ref{fig:Extat2}. In this figure, lines of slope $1$ are multiplication by $\eta$, the generator of $\pi_1^s(S^0)$. Since the stable homotopy groups of spheres are the cobordism groups of framed manifolds, we can interpret this class as a framed manifold: $S^1$ together with the non-bounding framing.

\begin{figure}[ht]
\centering
\includegraphics{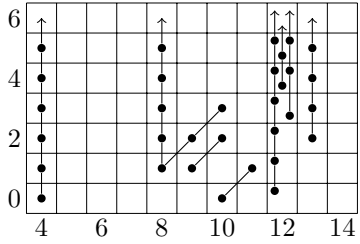}
\caption{The Adams $E_2$ term for $\widetilde{tmf}_k K(\Z,4)$, $k<15$}
\label{fig:Extat2}
\end{figure}

There is the possibility for a $d_2$ differential in degrees $10$ and $11$. Using Stong's computation of $\Oo_{11}^{\text{Spin}} K(\Z,4)$, we can show that there is such a differential.

\begin{thm}[\cite{StongMSpinKZ4}]
We have
\[
\Oo_{10}^{\text{Spin}} K(\Z,4)=\Z/2\oplus\Z/2\text{  and  }\Oo_{11}^{\text{Spin}} K(\Z,4)=0.
\]
\end{thm}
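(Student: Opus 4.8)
The plan is to run the Adams spectral sequence for $\widetilde{MSpin}_\ast K(\Z,4)$, paralleling the $tmf$/$MString$ computations above with $ko$ and $MSpin$ in place of $tmf$ and $MString$. The first step is to reduce to $ko$-homology at the prime $2$. By the Anderson--Brown--Peterson splitting, $MSpin_{(2)}$ is a wedge of suspensions $\Sigma^{n_i}E_i$ in which $E_0=ko$ with $n_0=0$, each $E_i$ for $i>0$ is a connective cover of $ko$ or a mod $2$ Eilenberg--MacLane spectrum, and $n_i\ge 8$ for $i>0$. Since $K(\Z,4)$ is $3$-connected, $\widetilde{E_i}_{j}K(\Z,4)=0$ for $j\le 3$, so $\widetilde{(\Sigma^{n_i}E_i)}_k K(\Z,4)=\widetilde{E_i}_{k-n_i}K(\Z,4)=0$ whenever $k-n_i\le 3$; for $k\le 11$ this kills every summand with $i>0$, giving $\Oo_k^{Spin}K(\Z,4)\otimes\Z_{(2)}\cong \widetilde{ko}_k K(\Z,4)_{(2)}$. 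At odd primes there is no torsion in $\tilde H_\ast(K(\Z,4))$ in degrees $\le 11$, so the Atiyah--Hirzebruch spectral sequence for $\widetilde{MSpin}_\ast K(\Z,4)$ with $2$ inverted collapses through that range and vanishes in degrees $10$ and $11$ for degree reasons. Hence $\Oo_{10}^{Spin}K(\Z,4)$ and $\Oo_{11}^{Spin}K(\Z,4)$ are finite $2$-groups, and it suffices to compute $\widetilde{ko}_{10}K(\Z,4)$ and $\widetilde{ko}_{11}K(\Z,4)$.

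For this I would use the $ko$-Adams spectral sequence $E_2=\Ext_{\A(1)}^{s,t}(\tilde H^\ast(K(\Z,4);\F_2),\F_2)\Longrightarrow\widetilde{ko}_{t-s}K(\Z,4)_2^\wedge$ recorded above. Restrict the $\A$-module $\tilde H^\ast(K(\Z,4);\F_2)$ of the Cartan--Serre corollary to $\A(1)=\langle Sq^1,Sq^2\rangle$ and compute its $\Ext$ over $\A(1)$, either with Bruner's program as in the $\A(2)$ case or by hand from the long exact sequences attached to the skeletal filtration, using that $\Ext_{\A(1)}$ is $4$-fold periodic in the $v_1^4$-direction so only a small window of the chart is needed. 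The map induced by $\A(1)\hookrightarrow\A(2)$ compares this chart with Figure~\ref{fig:Extat2}. In stems $10$ and $11$ the only ambiguity is a single potential Adams differential --- the $ko$-analogue of the $d_2$ anticipated above in the $tmf$ chart --- and once it is accounted for the $E_2$-page gives exactly $(\Z/2)^2$ in stem $10$, with the two surviving classes not $v_0$-linked so that there is no hidden additive extension, and $0$ in stem $11$.

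The crux, and the main obstacle, is showing that this differential is nonzero: it is not forced by the multiplicative structure of the $E_2$-page, so an external input is required. I would argue as Stong does for $\Oo_8^{Spin}K(\Z,4)$ (cf.\ the Remark above), using characteristic classes: on a Spin manifold the mod $2$ reduction of the relevant Pontryagin-type characteristic number is constrained by a Wu-class relation, and this relation rules out the class surviving to $E_\infty$ in the degree-$11$ chart, forcing the differential and giving $\widetilde{ko}_{11}K(\Z,4)=0$; the same differential reduces the degree-$10$ group to $(\Z/2)^2$. Feeding both answers back through the ABP splitting yields $\Oo_{10}^{Spin}K(\Z,4)=\Z/2\oplus\Z/2$ and $\Oo_{11}^{Spin}K(\Z,4)=0$. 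An alternative that avoids characteristic classes is to compare with the full $H\F_2$-Adams spectral sequence for $MSpin_\ast K(\Z,4)$ over the whole Steenrod algebra, but the characteristic-class argument is shorter and is the route taken in the cited reference.
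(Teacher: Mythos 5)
The statement you are asked to prove is not proved in the paper at all: it is imported verbatim from Stong's appendix \cite{StongMSpinKZ4} and then \emph{used} as the external input that forces the $d_2$ differential in the $ko$- and $tmf$-Adams spectral sequences. So the only question is whether your proposal is a correct free-standing proof. Your first reduction is sound and matches the paper's own use of the Anderson--Brown--Peterson splitting: since every summand of $MSpin_{(2)}$ other than the bottom $ko$ is at least $7$-connected and $K(\Z,4)$ is $3$-connected, only $ko$ contributes below dimension $12$, and the odd-primary contribution vanishes in degrees $10$ and $11$ for degree reasons. Reducing to $\widetilde{ko}_{10}K(\Z,4)$ and $\widetilde{ko}_{11}K(\Z,4)$ and setting up $\Ext_{\A(1)}$ is exactly the right frame, and you correctly identify that the entire theorem hinges on one $d_2$ that is not forced by the multiplicative structure of the $E_2$-page.

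That is precisely where the gap is. Your ``external input'' is a one-sentence appeal to a ``Wu-class relation'' on a Pontryagin-type characteristic number, with no class, no relation, and no argument specified. The Remark you are leaning on concerns a different phenomenon at a different prime: Stong's characteristic-class trick there (and its analogue at $p=5$ in this paper) kills \emph{odd} torsion by showing that $\beta\mathcal P^1$ detects $v_1$; it gives no mechanism for producing a mod $2$ Adams $d_2$ out of $\Ext_{\A(1)}$. You cannot borrow the differential from the paper either, since there the implication runs in the opposite direction: Stong's vanishing of $\Oo_{11}^{Spin}K(\Z,4)$ \emph{implies} $d_2(\eta x_{10})=\eta^2 x_8$, not the reverse; invoking that differential here would be circular. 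To close the gap you need an independent proof that the stem-$11$ class dies --- for instance Stong's actual analysis via $KO$- and Stiefel--Whitney characteristic numbers of explicit generators, a comparison with $ku$- or $KU$-homology of $K(\Z,4)$ where the corresponding group can be computed by other means, or a cofiber-sequence argument on skeleta of $K(\Z,4)$ in which the differential is forced. As written, the decisive step of the proof is asserted rather than proved.
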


We can show our desired differential by mapping down to $ko$. There is a commutative square
\[
\xymatrix{{MString}\ar[d]_{\sigma}\ar[r] & {MSpin}\ar[d]^{\hat{A}} \\
{tmf} \ar[r] & {ko.}}
\]
The $\hat{A}$-genus is the projection onto one of the summands of $MSpin$ described by Anderson, Brown, and Peterson \cite{AndersonBrownPetersonSpin}. Through dimension $10$, the splitting looks like
\[
MSpin=ko \vee ko[8] \vee ko[10],
\]
where $ko[n]$ denotes the $(n-1)$-connected cover of $ko$.

Since $K(\Z,4)$ begins in dimension $4$, we conclude that the $\hat{A}$-genus induces an isomorphism
\[
\Oo_{k}^{Spin}(K(\Z,4))\xrightarrow{\simeq} \widetilde{ko}_kK(\Z,4)
\]
for $k\leq 11$.

The Adams $E_2$ term for $\widetilde{ko}_\ast K(\Z,4)$ is very easy to compute directly, and is presented in Figure~\ref{fig:Extkoat2} through dimension $14$.

\begin{figure}[ht]
\centering
\includegraphics{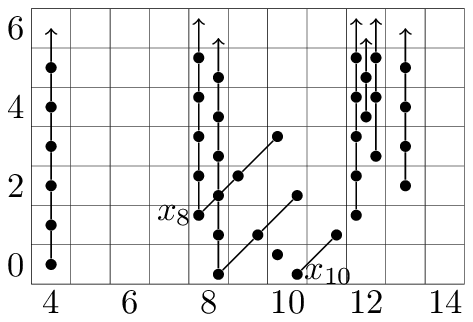}
\caption{The Adams $E_2$ term for $ko_\ast K(\Z,4)$}
\label{fig:Extkoat2}
\end{figure}

It is similarly easy to see that the map on Adams $E_2$ terms induced by $tmf\to ko$ is injective through dimension $14$.

Stong's computation that $\Oo_{11}^{Spin}K(\Z,4)$ is $0$ shows that the class $\eta x_{10}$ cannot survive the Adams spectral sequence. For degree reasons, the only way to remove this class is for it to support a differential, and the only possibility is
\[
d_2(\eta x_{10})=\eta^2 x_8.
\]
Since the Adams spectral sequence for $ko$-homology is a spectral sequence of modules over the Adams spectral sequence for $ko_\ast$, we conclude that we also have a differential of the form
\[
d_2(x_{10})=\eta x_8.
\]

Naturality of the Adams spectral sequence implies that the same must be true for $tmf$.

\begin{cor}
There is a $d_2$ differential in dimensions $10$ and $11$ in the Adams spectral sequence for $\widetilde{tmf}_\ast K(\Z,4)$.
\end{cor}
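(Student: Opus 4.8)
The plan is to obtain the differential by naturality from the corresponding, more transparent differential in the Adams spectral sequence for $\widetilde{ko}_\ast K(\Z,4)$, using the map of spectral sequences induced by $tmf\to ko$. First I would pin down the $ko$-side input. Combining Stong's computation $\Oo_{11}^{Spin}K(\Z,4)=0$ with the Anderson--Brown--Peterson splitting $MSpin=ko\vee ko[8]\vee ko[10]$ through dimension $10$, the $\hat A$-genus is an isomorphism $\Oo_{11}^{Spin}K(\Z,4)\xrightarrow{\simeq}\widetilde{ko}_{11}K(\Z,4)$, so $\widetilde{ko}_{11}K(\Z,4)=0$. Inspecting Figure~\ref{fig:Extkoat2}, the class $\eta x_{10}$ is the only class in stem $11$ on $E_2$, so it must die; for degree reasons it cannot be hit, so it must support a differential, and the only available target is the group generated by $\eta^2 x_8$, forcing $d_2(\eta x_{10})=\eta^2 x_8$ in the $ko$ spectral sequence.

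Next I would promote this to $d_2(x_{10})=\eta x_8$ in the $ko$ spectral sequence. The Adams spectral sequence for $\widetilde{ko}_\ast K(\Z,4)$ is a module over the Adams spectral sequence for $ko_\ast$, so $d_2$ commutes with multiplication by $\eta$. Since $\eta\cdot d_2(x_{10})=d_2(\eta x_{10})=\eta^2 x_8\ne 0$, we have $d_2(x_{10})\ne 0$, and once again the only target available in the relevant bidegree is $\eta x_8$.

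Finally, I would transfer both differentials to $tmf$. On cohomology the map $tmf\to ko$ is the quotient $\A/\!/\A(1)\to\A/\!/\A(2)$, so on $E_2$ it is the restriction $\Ext_{\A(2)}\to\Ext_{\A(1)}$, which is injective through dimension $14$. If $d_2(x_{10})$ vanished in the $tmf$ spectral sequence, then by commutativity of $d_2$ with the map of spectral sequences it would vanish in the $ko$ spectral sequence, contradicting the previous paragraph; hence $d_2(x_{10})\ne 0$ in $tmf$. Its image in the $ko$ spectral sequence is $\eta x_8$, and since $\eta x_8\in E_2^{tmf}$ also maps to $\eta x_8$, injectivity of the $E_2$ comparison in that bidegree forces $d_2(x_{10})=\eta x_8$ in $tmf$. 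The identical argument applied to $\eta x_{10}$ yields $d_2(\eta x_{10})=\eta^2 x_8$ in the $tmf$ spectral sequence, which is the assertion of the corollary.

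The routine naturality step at the end is formal; the real content is in the first two paragraphs. The main obstacle is to be certain the range in which $\hat A$ identifies Spin bordism with $ko$-homology genuinely reaches stems $10$ and $11$ (this depends on the precise shape of the ABP splitting there, with no stray low-dimensional summand interfering) and that the $E_2$ charts of Figures~\ref{fig:Extat2} and~\ref{fig:Extkoat2} are correct and sufficiently sparse in stems $8$ through $11$ that the cited $d_2$'s are the unique possibilities; given those facts together with the $ko_\ast$-module structure, the rest goes through mechanically.
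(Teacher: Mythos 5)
Your proposal is correct and follows essentially the same route as the paper: deduce $d_2(\eta x_{10})=\eta^2 x_8$ in the $ko$ spectral sequence from Stong's vanishing of $\Oo_{11}^{Spin}K(\Z,4)$ via the Anderson--Brown--Peterson splitting, divide by $\eta$ using the module structure over the Adams spectral sequence for $ko_\ast$, and transfer to $tmf$ by naturality. Your extra care in spelling out why naturality along $tmf\to ko$ forces (rather than merely permits) the differential on the $tmf$ side --- via injectivity of the $E_2$ comparison --- is a detail the paper leaves implicit but relies on.
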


In the homotopy of $ko$, there is a Massey product relation linking the generator of the $4$ stem with the lower generators:
\[
ko_4=\Z\cdot\langle 2,\eta,\eta^2\rangle.
\]
This relationship propagates, and in particular, we know that the generator of $13$ stem in the Adams $E_2$ term for $\widetilde{ko}_\ast K(\Z,4)$ is linked by a bracket of the form $\langle 2,\eta, -\rangle$ to $\eta x_{10}$. Adams differentials satisfy a kind of Leibnitz rule \cite{GreenBook}, and this allows us to conclude that there is a $d_2$ differential on the $13$-dimensional generator, hitting the generator of the $\Z$-tower beginning in filtration $4$ in dimension $12$ modulo indeterminacy. For this bracket, the indeterminacy is the submodule generated by $v_0$ times each of the generators in this degree, and thus cannot cancel out the piece of the differential hitting the aforementioned generator.

\begin{cor}
In dimension $12$, $\Oo_{k}^{String} K(\Z,4)\otimes\Z_{(2)}$ is torsion free.
\end{cor}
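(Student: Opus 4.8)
The plan is to read the statement off the $2$-primary Adams spectral sequence of Theorem~\ref{thm:tmfASSat2} for $\widetilde{tmf}_\ast K(\Z,4)$, which by the reduction of the previous section computes $\Oo_\ast^{String}K(\Z,4)\otimes\Z_{(2)}$ through dimension~$14$: concretely, I want to show that in total degree $12$ the $E_\infty$-page is a direct sum of infinite $v_0$-towers, with no isolated $\Z/2$'s and no finite $v_0$-segments, which is exactly torsion-freeness. Inspecting Figure~\ref{fig:Extat2}, the differential $d_2(x_{10})=\eta x_8$ of the preceding corollary does not touch the $12$-stem, so the only class there whose fate is not already settled is the generator $g$ at the bottom of the $v_0$-tower that begins in filtration $4$; the task thus reduces to producing a differential hitting $g$.

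The main step is to construct this differential by descending to the $ko$-homology Adams spectral sequence and transporting the result back along $tmf\to ko$, which is legitimate because that map is injective on $E_2$-pages through dimension~$14$. The Massey-product relation $ko_4=\Z\cdot\langle 2,\eta,\eta^2\rangle$ in the coefficients propagates to place the generator $z_{13}$ of the $13$-stem of the Adams $E_2$-page for $\widetilde{ko}_\ast K(\Z,4)$ inside $\langle 2,\eta,\eta x_{10}\rangle$. Since $2$ and $\eta$ are permanent cycles and $d_2(\eta x_{10})=\eta^2 x_8$, the compatibility of Adams differentials with Massey products --- the Leibniz rule of \cite{GreenBook} --- gives
\[
d_2(z_{13})\in\langle 2,\eta,\eta^2 x_8\rangle\supseteq\langle 2,\eta,\eta^2\rangle\cdot x_8,
\]
so that $d_2(z_{13})$ is equal to $g$ modulo the indeterminacy of the bracket. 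That indeterminacy is $v_0$ times the generators in degree $12$, all of strictly higher $v_0$-filtration than $g$, so it cannot cancel the component of $d_2(z_{13})$ landing on $g$; hence $g$ is genuinely hit, first in the $ko$ spectral sequence and then, by naturality, in the one for $\widetilde{tmf}_\ast K(\Z,4)$.

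With $g$ removed, a final pass over Figure~\ref{fig:Extat2} confirms that the $12$-stem of $E_\infty$ is a sum of infinite $v_0$-towers, so $\Oo_{12}^{String}K(\Z,4)\otimes\Z_{(2)}$ is torsion free; this is of course consistent with the free rank $2$ coming from Theorems~\ref{thm:BE8odd} and~\ref{thm:BE8at3}. I expect the real difficulty to lie in the middle step: one must verify that the Leibniz rule genuinely applies here --- that no higher Adams differential interferes with the bracket computation --- and that the indeterminacy analysis pins the target of $d_2(z_{13})$ onto $g$ on the nose, since the differentials available beforehand leave the fate of $g$ undetermined and so the Massey-product argument is doing the essential work.
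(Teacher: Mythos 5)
Your argument is the paper's own: the generator of the $13$-stem in the $ko$-based Adams $E_2$ term lies in $\langle 2,\eta,\eta x_{10}\rangle$ by propagating $ko_4=\Z\cdot\langle 2,\eta,\eta^2\rangle$, the Leibniz rule for Adams differentials applied to the established $d_2(\eta x_{10})=\eta^2 x_8$ forces $d_2$ on that generator to hit the bottom of the filtration-$4$ tower in the $12$-stem, the indeterminacy ($v_0$ times the degree-$12$ generators) cannot cancel this component, and naturality along $tmf\to ko$ carries the differential back. Your write-up is correct and only adds some welcome explicitness about the containment $\langle 2,\eta,\eta^2\rangle x_8\subseteq\langle 2,\eta,\eta^2 x_8\rangle$ and the transfer to the $tmf$ spectral sequence.
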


Using an analysis built out of the fact that $H^{13}(K(\Z,4);\Z)=\Z/20$, Francis has shown that the target of the differential hits non-trivial multiples of all of the classes in that degree \cite{JNKFBSpin}.

For degree reasons, there are no further differentials possible and no possible extensions. We conclude the following theorem.

\begin{thm}\label{thm:BE8at2}
For $k<15$, we have
\[
\Oo_k^{String} BE_8\otimes \Z_{(2)}=\begin{cases}
\Z_{(2)} & \text{when }k=4,8, \\
\Z/2 & \text{when }k=9,10, \\
\Z_{(2)}^2 & \text{when }k=12, \\
0 & \text{otherwise.}
\end{cases}
\]
\end{thm}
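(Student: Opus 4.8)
The plan is to obtain $\Oo_k^{String}BE_8\otimes\Z_{(2)}$ for $k<15$ by running the Adams spectral sequence of Theorem~\ref{thm:tmfASSat2} for $\widetilde{tmf}_\ast K(\Z,4)$ to its $E_\infty$-page and then invoking the identification $\widetilde{tmf}_k K(\Z,4)\cong\Oo_k^{String}BE_8$ for $k<15$. I would start from the $E_2$-term displayed in Figure~\ref{fig:Extat2}, retaining its module structure over the classes $v_0$ and $\eta$; this is the only input requiring a machine computation, and everything downstream is bookkeeping once the differentials are in place.

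The next step is to record those differentials. The square relating the $\sigma$-orientation to the $\hat A$-genus, Stong's vanishing $\Oo_{11}^{Spin}K(\Z,4)=0$, and naturality supply $d_2(x_{10})=\eta x_8$ together with its $\eta$-multiple $d_2(\eta x_{10})=\eta^2 x_8$; these clear out the $11$-stem entirely and leave a single surviving $v_0$-torsion class in each of the $9$- and $10$-stems. The Massey-product relation $ko_4=\Z\cdot\langle 2,\eta,\eta^2\rangle$ propagates to a bracket $\langle 2,\eta,-\rangle$ joining the $13$-stem generator to $\eta x_{10}$, and the Leibniz rule for Adams differentials then forces a $d_2$ on the $13$-stem generator landing, modulo indeterminacy, on a nonzero multiple of the bottom of a $\Z$-tower in dimension $12$; combined with the torsion-freeness statement in dimension $12$ and Francis's observation that this $d_2$ meets nontrivial multiples of every dimension-$12$ class in the relevant degree, exactly the two $v_0$-towers of the $E_2$-term survive in the $12$-stem. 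At this point one checks that the chart is too thin in stems $<15$ to support any $d_r$ with $r\ge 3$ or any hidden $v_0$- or $\eta$-extension, so that the $d_2$'s above constitute the entire spectral sequence. Reading off the surviving $v_0$-towers and torsion classes then gives $\Z_{(2)}$ in degrees $4$ and $8$, $\Z/2$ in degrees $9$ and $10$, $\Z_{(2)}^2$ in degree $12$, and $0$ in degrees $5,6,7,11,13,14$; agreement with Theorems~\ref{thm:BE8odd} and~\ref{thm:BE8at3} in degree $12$ gives a consistency check on the rank.

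I expect the genuine obstacle to be the coupled behavior of the $12$- and $13$-stems rather than the chart-reading: a priori the $E_2$-term carries extra $\eta$- and $v_0$-linked classes there, and one must show the $d_2$ off the $13$-stem generator is nonzero on the nose and hits enough of dimension $12$ to leave precisely $\Z_{(2)}^2$ --- too small a differential leaves spurious torsion, and too large a one drops the rank below the value already known at the odd primes. Pinning this down is exactly what forces the appeal to the Massey-product Leibniz argument and to Francis's analysis of $H^{13}(K(\Z,4);\Z)=\Z/20$; once these are available, the remaining degrees are routine, since the neighboring columns of Figure~\ref{fig:Extat2} are too sparse to admit further differentials or extensions.
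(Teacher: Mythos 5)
Your proposal is correct and follows essentially the same route as the paper: Bruner's $\Ext_{\A(2)}$ computation as input, the $d_2$ in stems $10$--$11$ forced by Stong's vanishing of $\Oo_{11}^{Spin}K(\Z,4)$ via the comparison square with $ko$ and naturality, the Massey-product/Leibniz argument plus Francis's analysis for the $d_2$ off the $13$-stem, and a degree-reasons check that nothing else happens. No substantive differences to report.
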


Our analysis of the Adams spectral sequences for $\widetilde{ko}_\ast K(\Z,4)$ and $\widetilde{tmf}_\ast K(\Z,4)$ shows an additional result, since all of the differentials for $\widetilde{ko}_\ast K(\Z,4)$ lift to differentials for $\widetilde{tmf}_\ast K(\Z,4)$.

\begin{cor}\label{cor:injective}
The map
\[
\Oo_{k}^{String}(BE_8)\otimes\Z_{(2)}\to\Oo_{k}^{Spin}(BE_8)\otimes\Z_{(2)}
\]
is injective for $k<14$.
\end{cor}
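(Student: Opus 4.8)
The plan is to reduce the statement — which is already $2$-local — to the injectivity of a single map in $ko$-homology that is visible on Adams $E_2$-pages, and then to upgrade $E_2$-injectivity to $E_\infty$-injectivity using that every relevant $ko$-differential lifts.

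First, applying $\widetilde{(-)}_k(BE_8)$ to the commutative square relating $\sigma$, the $\hat A$-genus, and the maps $MString\to MSpin$, $tmf\to ko$ gives
\[
\hat A_\ast\circ(MString\to MSpin)_\ast=(tmf\to ko)_\ast\circ\sigma_\ast\colon\Oo_k^{String}(BE_8)\to\widetilde{ko}_k(BE_8),
\]
and $\sigma_\ast\colon\Oo_k^{String}(BE_8)\to\widetilde{tmf}_k(BE_8)$ is an isomorphism for $k<15$. Using the skeletal equivalence $BE_8^{[15]}\simeq K(\Z,4)^{[15]}$ together with the induced isomorphisms in $tmf$- and $ko$-homology through degree $14$ (both spectra being $(-1)$-connected), the map $(tmf\to ko)_\ast$ on $BE_8$ is identified with the natural map $\widetilde{tmf}_k K(\Z,4)\to\widetilde{ko}_k K(\Z,4)$. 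Hence, if this last map is injective for $k<14$, the displayed composite is injective, and therefore so is its first factor $(MString\to MSpin)_\ast$. It thus suffices to prove that $\widetilde{tmf}_k K(\Z,4)\to\widetilde{ko}_k K(\Z,4)$ is injective for $k<14$; as all groups in sight are finitely generated, this may be checked after $2$-completion, and the case $k=13$ is trivial because $\Oo_{13}^{String}(BE_8)=0$.

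For this I would compare the Adams spectral sequence of Theorem~\ref{thm:tmfASSat2} with its $ko$-analogue. On $E_2$ the comparison map is the canonical map
\[
\Ext_{\A(2)}\big(\tilde H^\ast(K(\Z,4);\F_2),\F_2\big)\to\Ext_{\A(1)}\big(\tilde H^\ast(K(\Z,4);\F_2),\F_2\big)
\]
induced by $\A(1)\hookrightarrow\A(2)$, which was already observed to be injective through dimension $14$. This by itself does not force injectivity on $E_\infty$, since a permanent cycle on the $tmf$-side could a priori map into the image of a $ko$-differential; the decisive extra input is that in this range every differential in the Adams spectral sequence for $\widetilde{ko}_\ast K(\Z,4)$ lifts to one for $\widetilde{tmf}_\ast K(\Z,4)$ — equivalently, that the image of each $d_r$ on the $ko$-side is the image under the comparison map of the image of $d_r$ on the $tmf$-side. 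This holds for the $d_2$ on $x_{10}$ because $tmf\to ko$ is a ring map compatible with the module structures over the respective unit spectral sequences, and for the $d_2$ off the degree-$13$ generator because that differential is governed by the bracket $\langle 2,\eta,-\rangle$ and transfers through the Leibnitz rule for Adams differentials \cite{GreenBook}. Granting this, injectivity propagates from $E_2$ to $E_\infty$ by induction on $r$: if $x$ is a $d_r$-cycle on the $tmf$-side whose class in $E_{r+1}$ maps to zero on the $ko$-side, then $f_r(x)$ is a $d_r$-boundary there, hence the comparison-map image of a $d_r$-boundary on the $tmf$-side, hence — by injectivity on $E_r$ — itself a boundary, so $x$ represents $0$ in $E_{r+1}$. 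Since the Adams filtrations are Hausdorff, a comparison map that is injective on $E_\infty$ is injective on the abutment, a nonzero element of minimal filtration being detected on the associated graded. With the first paragraph, this proves the corollary.

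The bookkeeping above is routine; the real content lies in its hypothesis, that the delicate differentials in degrees $12$ and $13$ are genuinely natural under $tmf\to ko$. In particular the $d_2$ off the $13$-stem generator — whose target among the degree-$12$ generators is pinned down via Francis's analysis of $H^{13}(K(\Z,4);\Z)=\Z/20$ together with the Massey-product relation $ko_4=\Z\cdot\langle 2,\eta,\eta^2\rangle$ \cite{JNKFBSpin} — must push forward to the corresponding $ko$-differential, so that nothing surviving to $\widetilde{tmf}_{12}K(\Z,4)$ is sent into the image of a $ko$-differential. Establishing this compatibility is the step I expect to require the most care; once it is in hand, the $E_2$-injectivity supplies everything else.
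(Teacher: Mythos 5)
Your proposal is correct and follows the paper's argument exactly: the paper also deduces injectivity from the injectivity of the comparison map on Adams $E_2$-terms together with the fact that every differential in the spectral sequence for $\widetilde{ko}_\ast K(\Z,4)$ lifts to one for $\widetilde{tmf}_\ast K(\Z,4)$ (indeed, the $tmf$-differentials in dimensions $10$--$13$ were obtained from the $ko$-ones by naturality and the Leibnitz rule in the first place). You have merely made explicit the reduction through the commutative square and the passage from $E_\infty$ to the abutment, which the paper leaves implicit.
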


Theorems~\ref{thm:BE8odd}, \ref{thm:BE8at3}, and \ref{thm:BE8at2} together give a restatement of Theorem~\ref{thm:MStringBE8}. In each degree, we simply find a finitely generated abelian group which has the correct localizations with respect to each prime, and this yields the table given in the introduction. Since the String bordism groups with $2$ inverted are torsion free, Corollary~\ref{cor:injective} implies that integrally,
\[
\Oo_k^{String}(BE_8)\to\Oo_k^{Spin}(BE_8)
\]
is injective for $k<14$.

In particular, in dimension $10$, a dimension of particular interest to String theorists, the cobordism invariant discussed by Diaconescu-Moore-Witten can also be used to detect cobordism classes of String $10$-manifolds equipped with a $4$-dimensional cohomology class \cite{DiaconescuMooreWitten}.

\section{The Groups $\Oo_k^{String}(BE_8\times BE_8)$ for $k<15$}

The earlier analysis goes though {\em{mutatis mutandis}}, showing that it will suffice to compute $\widetilde{tmf}_k K(\Z\times\Z,4)$. Since we have a stable splitting
\[K(\Z\times\Z,4)=K(\Z,4)\vee K(\Z,4) \vee K(\Z,4)\wedge K(\Z,4),\] the homology computation will be two copies of the results of the previous section together with a factor coming from the smash square of $K(\Z,4)$.

The cohomology of this is easily determined. Let $I$ denote the augmentation ideal of the graded algebra $H^\ast(X;\F_p)$ for any simply connected space $X$. Since we are working over a field, the K\"unneth theorem says that
\[
H^\ast(X\times X;\F_p)=H^\ast(X;\F_p)\otimes H^\ast(X;\F_p).
\]
The reduced cohomology of $X\wedge X$ is therefore given by the ideal $I\otimes I$ sitting in this algebra, and the action of the Steenrod algebra is given by the Cartan formula and the action on $H^\ast(X;\F_p)$. In particular, since the augmentation ideal begins in degree $4$, there are very few possible classes in $I\otimes I$ below degree $16$.

\subsection{Computation with $6$ Inverted}
For degree reasons, the Atiyah-Hirzebruch spectral sequence collapses, and we quickly conclude the following theorem.

\begin{thm}\label{thm:BE8BE8odd}
For $k<15$, we have
\[
\Oo_k^{String}(BE_8\wedge BE_8)[\tfrac{1}{6}]=\begin{cases}
\Z[\tfrac{1}{6}] & \text{when }k=8, \\
\Z[\tfrac{1}{6}]^2 & \text{when }k=12, \\
0 & \text{otherwise.}
\end{cases}
\]
\end{thm}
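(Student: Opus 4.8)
The plan is to follow the template of the proofs of Theorems~\ref{thm:BE8odd} and~\ref{thm:BE8at3}. As already noted, the reduction of the preceding sections applies verbatim, so via the stable splitting $K(\Z\times\Z,4)\simeq K(\Z,4)\vee K(\Z,4)\vee K(\Z,4)\wedge K(\Z,4)$ together with the $\sigma$-orientation it suffices to show that $\widetilde{tmf}_k\big(K(\Z,4)\wedge K(\Z,4)\big)[\tfrac{1}{6}]$ has the stated values for $k<15$. I would extract these from the Atiyah--Hirzebruch spectral sequence, which is transparent here because $tmf_\ast[\tfrac{1}{6}]=\Z[\tfrac{1}{6}][c_4,c_6]$ (with $|c_4|=8$, $|c_6|=12$) is torsion-free.

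The next step is to pin down the homology of the smash square in range. Over a field the K\"unneth theorem gives $\widetilde H^\ast(X\wedge X;\F_p)=I\otimes I$, where $I$ is the augmentation ideal of $H^\ast(K(\Z,4);\F_p)$. Since $I$ begins in degree $4$ and, for every $p\geq 5$, the only classes of $I$ in degrees below $12$ are $\iota_4$ and $\iota_4^2$, the only classes of $I\otimes I$ below degree $16$ are $\iota_4\otimes\iota_4$ in degree $8$ and $\iota_4\otimes\iota_4^2,\ \iota_4^2\otimes\iota_4$ in degree $12$. All of these are annihilated by $\beta$, so with $\Z[\tfrac{1}{6}]$ coefficients the cohomology, hence the homology, of $K(\Z,4)\wedge K(\Z,4)$ is torsion-free in degrees below $16$: it is $\Z[\tfrac{1}{6}]$ in degree $8$, $\Z[\tfrac{1}{6}]^2$ in degree $12$, and zero in every other degree in that range.

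Finally, the spectral sequence $E^2_{p,q}=\widetilde H_p\big(K(\Z,4)\wedge K(\Z,4);\,tmf_q[\tfrac{1}{6}]\big)\Rightarrow\widetilde{tmf}_{p+q}\big(K(\Z,4)\wedge K(\Z,4)\big)[\tfrac{1}{6}]$ has, in total degrees at most $14$, only the two nonzero groups $E^2_{8,0}=\Z[\tfrac{1}{6}]$ and $E^2_{12,0}=\Z[\tfrac{1}{6}]^2$: the homology of the space is concentrated in degrees $8$ and $12$ below $16$, while $tmf_\ast[\tfrac{1}{6}]$ vanishes in the positive degrees below $8$. Every $d_r$ into or out of these two spots lands in a trivial group (for instance the only possibly relevant differential out of $(12,0)$ is $d_4$ into $E^4_{8,3}$, and $tmf_3[\tfrac{1}{6}]=0$), so $E^2=E^\infty$ through total degree $14$, there are no extension problems, and the asserted answer follows.

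\textbf{Main obstacle.} There is no deep obstacle: this is a degree-reasons collapse, which is why the statement is asserted in one line. The only point demanding attention is the bookkeeping at $p=5$. In Theorem~\ref{thm:BE8odd} the $5$-torsion in $H_\ast(K(\Z,4);\Z[\tfrac{1}{6}])$ near degree $12$ forced a detour through the Adams spectral sequence; here one must check that this torsion does not re-enter the range $k<15$ after passing to the smash square, and indeed it cannot, since in $I\otimes I$ it would first contribute in degree $\geq 16$. The degree count of the second step makes this transparent.
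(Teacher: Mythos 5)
Your argument is correct and is exactly the paper's: the paper also observes that $\widetilde{H}^\ast(K(\Z,4)\wedge K(\Z,4))=I\otimes I$ begins in degree $8$ and is concentrated in degrees $8$ and $12$ below $16$ away from $2$ and $3$, so the Atiyah--Hirzebruch spectral sequence collapses for degree reasons. Your extra remark that the $5$-torsion of $H_\ast(K(\Z,4);\Z[\tfrac{1}{6}])$ near degree $12$ only re-enters $I\otimes I$ in degree $\geq 16$ is a worthwhile detail the paper leaves implicit.
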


\subsection{The Case of $p=3$}

Since we have a stable splitting of the form
\[
K(\Z,4)=A \vee B,
\]
where the bottom cell of $A$ is in dimension $4$ while that of $B$ is in $8$, we have a similar splitting
\[
K(\Z,4)\wedge K(\Z,4)=(A\wedge A)\vee (A\wedge B)\vee (B\wedge A) \vee (B\wedge B).
\]

Since we are looking only through dimension $15$, the cells in dimensions larger than $9$ in $A$ play no role. The $9$-skeleton of $A$ is a desuspension of $B$, and this means that for purposes of our computation, each summand of the right-hand side is a suspension of spectra of the form $A\wedge A$. On these, there is an action of $\Z/2$ given by interchanging the factors. This again allows us to split each factor into eigenspaces. Working through all of the computations allows us to easily compute $\tilde{H}^\ast(K(\Z,4)\wedge K(\Z,4))$, and this is presented in Figure~\ref{fig:ModuleZZat3}. We remark that here the bottom cell is in dimension $8$, meaning that the figure contains more information than is necessary.

\begin{figure}[ht]
\centering
\includegraphics{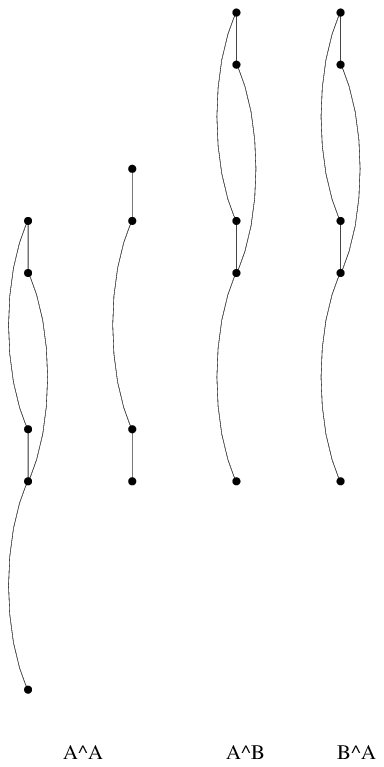}
\caption{$\tilde{H}^k\big(K(\Z,4)\wedge K(\Z,4);\F_3\big)$ for $k\leq 15$}
\label{fig:ModuleZZat3}
\end{figure}

The above analysis shows that the splitting drawn for the modules is reflected topologically. This means that there are no possible differentials or extensions linking the summands. A fairly straightforward computation yields the relevant $\Ext$ groups, and the result is given in Figure~\ref{fig:ExtZZat3}.

\begin{figure}[ht]
\centering
\includegraphics{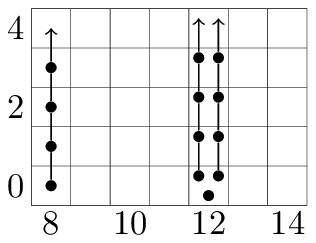}
\caption{An Adams $E_2$ term for $\widetilde{tmf}_\ast \big(K(\Z,4)\wedge K(\Z,4)\big)$ at $3$}
\label{fig:ExtZZat3}
\end{figure}

We see in particular that there are no possible differentials or extensions. This allows us to conclude the following theorem.

\begin{thm}\label{thm:BE8BE83}
Through dimension $15$,
\[
\Oo_k^{String}(BE_8\wedge BE_8)\otimes\Z_{(3)}=\begin{cases}
\Z_{(3)} & \text{when }k=8, \\
\Z_{(3)}^2\oplus \Z/3 & \text{when }k=12, \\
0 & \text{otherwise.}
\end{cases}
\]
\end{thm}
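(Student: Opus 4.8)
The plan is to run the argument of Section~3 \emph{mutatis mutandis}. The reduction of Section~2.1 applies verbatim to the smash square, so it suffices to compute $\widetilde{tmf}_k\big(K(\Z,4)\wedge K(\Z,4)\big)\otimes\Z_{(3)}$ for $k<15$; by Theorem~\ref{thm:tmfASSat3} this is the abutment of an Adams spectral sequence with
\[
E_2=\Ext_{\tilde{\A}(1)}^{s,t}\big(\tilde H^\ast(K(\Z,4)\wedge K(\Z,4);\F_3),\F_3\big).
\]

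First I would pin down the $\tilde{\A}(1)$-module $\tilde H^\ast\big(K(\Z,4)\wedge K(\Z,4);\F_3\big)$ through degree $15$. By the K\"unneth theorem this is the ideal $I\otimes I$ inside $H^\ast(K(\Z,4);\F_3)^{\otimes 2}$, and the Steenrod action is forced by the Cartan formula together with the known action of $\mathcal P^1$ and $\beta$ on $H^\ast K(\Z,4)$. To keep the bookkeeping manageable I would use the stable splitting $K(\Z,4)\simeq A\vee B$, where $A$ carries the $+1$-eigenspace of multiplication by $-1$ and $B$ (which is $7$-connected) carries the $-1$-eigenspace; since the cells of $A$ above dimension $9$ are out of range and $A^{[9]}$ is a desuspension of $B$, each summand of $(A\wedge A)\vee(A\wedge B)\vee(B\wedge A)\vee(B\wedge B)$ is a suspension of $A\wedge A$, on which the factor-swap $\Z/2$ acts. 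Splitting into eigenspaces decomposes the module into the pieces drawn in Figure~\ref{fig:ModuleZZat3}. Because this decomposition is realized geometrically, no differential or additive extension in the spectral sequence can cross between summands.

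Next I would compute $\Ext_{\tilde{\A}(1)}$ of each summand using the long exact sequences in $\Ext$ coming from the skeletal cofiber sequences, exactly as in \cite{HilltmfBSigma3}; assembling these gives Figure~\ref{fig:ExtZZat3}. Finally I would observe that the resulting chart is sparse enough that there is no room for a $d_r$ differential or a hidden additive extension in total degree below $16$: the only $v_0$-towers occur in degrees $8$ and $12$, matching the torsion-free Atiyah--Hirzebruch input of Theorem~\ref{thm:BE8BE8odd} localized at $3$, while the lone $v_0$-torsion class in degree $12$ is a permanent cycle with nothing available to hit it. Reading off the columns then yields exactly the stated groups, with the $\Z/3$ in degree $12$ coming from that isolated torsion class.

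I expect the main obstacle to be the first step: correctly identifying the $\tilde{\A}(1)$-module structure on $\tilde H^\ast\big(K(\Z,4)\wedge K(\Z,4);\F_3\big)$ and verifying that the topological eigenspace decomposition really does realize the visible splitting of the module. One has to be careful applying the Cartan formula to products in $I\otimes I$ — for instance $\iota_4\otimes\iota_4$ in degree $8$ and $\mathcal P^1\iota_4\otimes\iota_4$, $\iota_4\otimes\mathcal P^1\iota_4$, $\iota_4\otimes\iota_4^2$ and the like in degree $12$ — and to confirm (here automatic, since the bottom cell is in dimension $8$ and the exotic generator $a_2$ of $\tilde{\A}(1)_\ast$ is dual to a class of degree $9$) that $a_2$ acts trivially in this range. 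Once the module is in hand, the remaining $\Ext$ computation and the collapse of the spectral sequence are formal, driven entirely by degree considerations, just as in the proof of Theorem~\ref{thm:BE8at3}.
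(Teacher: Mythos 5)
Your proposal follows the paper's proof essentially verbatim: the same stable splitting of the smash square via $K(\Z,4)\simeq A\vee B$ and the factor-swap $\Z/2$-action, the same K\"unneth/Cartan determination of the $\tilde{\A}(1)$-module, the same skeletal long-exact-sequence computation of $\Ext$, and the same degree-based collapse of the Adams spectral sequence. (One trivial quibble: $A$, whose bottom cell is $\iota_4$, carries the $-1$-eigenspace of the inversion and $B$ the $+1$-eigenspace, not the other way around; this has no effect on the argument.)
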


\subsection{The Case of $p=2$}

For $p=2$, the analysis for $K(\Z,4)\wedge K(\Z,4)$ is much simpler than that for $K(\Z,4)$. Computing the action of the Steenrod algebra in the cohomology is very easy using the method described above. However, given the number of cells, it is also incredibly tedious. Bruner's Ext program package contains a program for computing the tensor product of two $\A(2)$ modules, allowing us to automate this computation. Combining this computation with the $\Ext$ program allows us to compute the relevant groups. These are depicted in Figure~\ref{fig:ExtZZat2}.

\begin{figure}[ht]
\centering
\includegraphics{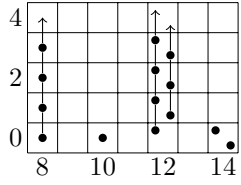}
\caption{The Adams $E_2$ term for $\widetilde{tmf}_k \big(K(\Z,4)\wedge K(\Z,4)\big)$, $k<15$}
\label{fig:ExtZZat2}
\end{figure}

For degree reasons, there are no possible differentials or extensions, and we conclude the following theorem.

\begin{thm}\label{thm:BE8BE82}
For $k<15$,
\[
\Oo_k^{String} (BE_8\wedge BE_8)\otimes\Z_{(2)}=\begin{cases}
\Z & \text{when }k=8, \\
\Z/2 & \text{when }k=10, \\
\Z^2 & \text{when }k=12, \\
(\Z/2)^2 & \text{when }k=14, \\
0 & \text{otherwise.}
\end{cases}
\]
\end{thm}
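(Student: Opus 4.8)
The plan is to run the same machine at $p=2$ as in the $K(\Z,4)$ case, now for the smash square. By the reduction at the start of this section it suffices to compute $\widetilde{tmf}_k\big(K(\Z,4)\wedge K(\Z,4)\big)_2^\wedge$ for $k<15$, since this is the only new summand of $\widetilde{tmf}_k K(\Z\times\Z,4)$ beyond two copies of $\widetilde{tmf}_k K(\Z,4)$, and Theorem~\ref{thm:tmfASSat2} supplies an Adams spectral sequence
\[
E_2=\Ext_{\A(2)}^{s,t}\big(\tilde{H}^\ast(K(\Z,4)\wedge K(\Z,4);\F_2),\F_2\big)\Longrightarrow\widetilde{tmf}_{t-s}\big(K(\Z,4)\wedge K(\Z,4)\big)_2^\wedge .
\]

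First I would pin down the $\A(2)$-module $M:=\tilde{H}^\ast(K(\Z,4)\wedge K(\Z,4);\F_2)$ in degrees $\le 15$. By the K\"unneth theorem this is the ideal $I\otimes I$ inside $H^\ast(K(\Z,4);\F_2)^{\otimes 2}$, where $I$ is the augmentation ideal; the underlying graded vector space is read off from the Corollary listing the generators of $H^\ast(K(\Z,4);\F_2)$, and the Steenrod action follows from Figure~\ref{fig:Moduleat2} together with the Cartan formula. Because $I$ begins in degree $4$, the module $M$ is $7$-connected, so only the finitely many cells in degrees $8$ through $15$ enter the computation. This is elementary but bulky, so I would carry it out with Bruner's tensor-product routine for $\A(2)$-modules and then feed the output to his $\Ext$ program to produce the chart in Figure~\ref{fig:ExtZZat2}.

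Second, I would check that no Adams differentials are possible in the range. A $d_r$ lowers $t-s$ by $1$ and raises $s$ by $r\ge 2$; since $M$ is concentrated in stems $\ge 8$, the chart has very little in low filtration through stem $14$, and one simply verifies that for each class there is no class in the bidegree $(r,1-r)$ away from it to be the source or target of a differential. Finally, I would rule out hidden additive extensions: in each relevant stem the $E_2=E_\infty$ page is either a single $v_0$-tower, contributing a copy of $\Z_{(2)}$ (hence a $\Z$ after reassembling over all primes, by the finite-type remark made earlier), or a collection of isolated $v_0$-torsion dots with nothing above them to support an extension, so the associated graded already realizes the stated groups. Assembling the contributions in stems $8$, $10$, $12$, and $14$ then gives the table.

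The main obstacle is purely the bookkeeping: the genuinely delicate step is getting the $\A(2)$-module structure of $I\otimes I$ exactly right, since the number of cells is large and the Cartan-formula computations are easy to mis-tabulate. I would therefore spot-check the automated module and $\Ext$ output by computing the action and a few low-dimensional $\Ext$ groups by hand in, say, degrees $8$ through $11$; once the $E_2$ chart is trusted, the absence of differentials and of extensions in this range is a matter of reading off Figure~\ref{fig:ExtZZat2}.
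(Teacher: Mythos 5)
Your proposal matches the paper's argument: reduce to the Adams spectral sequence over $\A(2)$ for the smash square, compute the $\A(2)$-module $I\otimes I$ and its $\Ext$ groups with Bruner's tensor-product and $\Ext$ programs, and observe that degree reasons preclude differentials and extensions. This is exactly the route taken in the paper.
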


Just as before, Theorems~\ref{thm:BE8BE8odd}, \ref{thm:BE8BE83}, and \ref{thm:BE8BE82} together yield Theorem~\ref{thm:MStringBE8BE8}. Comparing the localizations yields the table from that theorem. We do remark that while the map
\[
\Oo_k^{String} BE_8\to \Oo_k^{Spin} BE_8
\]
is injective for $k<15$, the map
\[
\Oo_k^{String} (BE_8\times BE_8)\to \Oo_k^{Spin} (BE_8\times BE_8)
\]
is not.

\bibliographystyle{my-h-elsevier}

\begin{thebibliography}{10}


\bibitem{AndersonBrownPetersonSpin}
D.~W. Anderson, E.~H. Brown, Jr., and F.~P. Peterson,
The structure of the Spin cobordism ring,
Ann. of Math. (2) {\bf{86}} (1967), 271--298.

\bibitem{AndoHopkinsStricklandHinfty}
Matthew Ando, Michael~J. Hopkins, and Neil~P. Strickland,
The sigma orientation is an {$H\sb \infty$} map,
Amer. J. Math. {\bf{126}} (2004), no.~2, 247--334.

\bibitem{BahriMahowaldMString}
A.~P. Bahri and M.~E. Mahowald,
A direct summand in {$H\sp{\ast} (M{\rm O}\langle 8\rangle ,\,Z\sb{2})$},
Proc. AMS {\bf{78}} (1980), no.~2, 295--298.

\bibitem{BakerLazarev}
Andrew Baker and Andrej Lazarev,
On the Adams spectral sequence for {$R$}-modules,
Algebr. Geom. Topol. {\bf{1}} (2001), 173--199

\bibitem{BottSamelson}
Raoul Bott and Hans Samelson,
Applications of the theory of Morse to symmetric spaces,
Amer. J. Math. {\bf{80}} (1958), 964--1029.

\bibitem{BrunerExt90s}
Robert~R. Bruner,
{${\rm Ext}$} in the nineties,
Algebraic topology (Oaxtepec, 1991), Contemp. Math., vol. 146, Amer. Math. Soc., Providence, RI, 1993, pp.~71--90.

\bibitem{CartanKpin}
Henri Cartan,
Sur les groupes d'Eilenberg-MacLane. {II},
Proc. Nat. Acad. Sci. U. S. A. {\bf{40}} (1954), 704--707.

\bibitem{DiaconescuMooreWitten}
Duiliu-Emanuel Diaconescu, Gregory Moore, and Edward Witten,
{$E\sb 8$} gauge theory, and a derivation of {$K$}-theory from M-theory,
Adv. Theor. Math. Phys. {\bf{6}} (2002), no.~6, 1031--1134 (2003).

\bibitem{EdwardsMSpinBE8BE8}
Steven~R. Edwards,
On the spin bordism of {$B(E\sb 8\times E\sb 8)$},
Illinois J. Math. {\bf{35}} (1991), no.~4, 683--689.

\bibitem{JNKFBSpin}
John N.~K. Francis,
Spin bordism of {$BSpin$} and {$K({Z},4)$}: Integrality and index theory,
Undergraduate Thesis, Harvard University.

\bibitem{HilltmfBSigma3}
Michael~A. Hill,
The 3-local {${\rm tmf}$}-homology of {$B\Sigma\sb 3$},
Proc. AMS {\bf{135}} (2007), no.~12, 4075--4086 (electronic).

\bibitem{HopkinsMahowaldEO2}
Michael~J. Hopkins and Mark Mahowald,
From elliptic curves to homotopy theory,
Available on the Hopf Archive, 1998.

\bibitem{HMtmf}
Michael~J. Hopkins and Haynes~R. Miller,
Elliptic curves and stable homotopy {I},
In preparation.

\bibitem{HoveyRavenelBO8}
Mark~A. Hovey and Douglas~C. Ravenel,
The {$7$}-connected cobordism ring at {$p=3$},
Trans. AMS {\bf{347}} (1995), no.~9, 3473--3502.

\bibitem{LandweberStongBilinearForm}
Peter~S. Landweber and Robert~E. Stong,
A bilinear form for Spin manifolds,
Trans. AMS {\bf{300}} (1987), no.~2, 625--640.

\bibitem{GreenBook}
Douglas~C. Ravenel,
Complex cobordism and stable homotopy groups of spheres,
Pure and Applied Mathematics, vol. 121, Academic Press Inc., Orlando, FL, 1986.

\bibitem{Rezk512Notes}
Charles Rezk,
Supplimentary notes for math 512,
{http://www.math.uiuc.edu/~rezk/papers.html}.

\bibitem{SerreKpin}
Jean-Pierre Serre,
Sur les groupes d'Eilenberg-MacLane,
C. R. Acad. Sci. Paris {\bf{234}} (1952), 1243--1245.

\bibitem{StongMSpinKZ4}
R.~E. Stong,
Appendix: calculation of {$\Omega\sp {{\rm Spin}}\sb
  {11}(K(Z,4))$},
Workshop on unified string theories (Santa Barbara, Calif., 1985),
World Sci. Publishing, Singapore, 1986, pp.~430--437.

\bibitem{StongBOn}
Robert~E. Stong,
Determination of {$H\sp{\ast} ({\rm BO}(k,\cdots,\infty),Z\sb{2})$}\ and {$H\sp{\ast} ({\rm BU}(k,\cdots,\infty ),Z\sb{2})$},
Trans. AMS {\bf{107}} (1963), 526--544.


\end{thebibliography}

\end{document}